\documentclass[12pt,twoside]{amsart}
\usepackage[all]{xy}
\usepackage{graphicx}

\usepackage{xcolor}

\usepackage{amssymb}

\title[An affine local criterion for toric projective space bundles]
{An affine local criterion for toric projective space bundles} 
\author[Osamu Fujino]{Osamu Fujino}
\author[Hiroshi Sato]{Hiroshi Sato}

\subjclass[2020]{Primary 14M25; Secondary 14E30.}
\date{2026/8/11, version 0.02}
\keywords{Toric Mori theory, lengths of 
extremal rays, Fano contractions, projective space bundles, 
non-$\mathbb{Q}$-factorial varieties, 
relative canonical divisors}
\address{Department of 
Mathematics, Graduate School of Science, 
Kyoto University, Kyoto 606-8502, Japan}
\email{fujino@math.kyoto-u.ac.jp}
\address{Department of Applied Mathematics, 
Faculty of Sciences, Fukuoka University, 
8-19-1, Nanakuma, Jonan-ku, Fukuoka 814-0180, Japan}
\email{hirosato@fukuoka-u.ac.jp}

\makeatletter
    
    \@addtoreset{equation}{section}
\makeatother

\newcommand{\NE}[0]{{\operatorname{NE}}}

\newcommand{\RR}{\mathbb R}
\newcommand{\ZZ}{\mathbb Z}
\newcommand{\PP}{\mathbb P}
\newcommand{\GGm}{\mathbb G_m}
\newcommand{\Cone}{\operatorname{Cone}}
\newcommand{\Span}{\operatorname{Span}}

\newtheorem{thm}{Theorem}[section]
\newtheorem{lem}[thm]{Lemma}
\newtheorem{cor}[thm]{Corollary}
\newtheorem{prop}[thm]{Proposition}

\theoremstyle{definition}
\newtheorem{ex}[thm]{Example}

\newtheorem{rem}[thm]{Remark}
\newtheorem*{ack}{Acknowledgments}   
\newtheorem{que}[thm]{Question}

\begin{document}

\begin{abstract}
We study when an equidimensional toric morphism 
is forced to be a
projective-space bundle.  Our main result is an affine rigidity
theorem: if the base space is 
affine, the toric relative canonical divisor is
\(\mathbb Q\)-Cartier, and its negative has degree greater than the
relative dimension on every complete curve, then the morphism is
equivariantly a trivial projective-space bundle. 
As an 
application, we derive a projective-space-bundle theorem for
equidimensional toric contractions associated to long extremal rays,
without assuming \(\mathbb Q\)-factoriality.
\end{abstract}

\maketitle

\section{Introduction} 

Projective-space-bundle structures arise naturally in the study of
toric extremal contractions associated to long extremal rays; 
see \cite{fujinosato-notes2}, \cite{fujino-length}, 
\cite{foliation1}, and \cite{foliation2}. 
The authors proved that if \(X\) is a \(\mathbb Q\)-factorial
projective toric variety and
\[
  \varphi_R\colon X\longrightarrow W
\]
is a Fano contraction associated to a \(K_X\)-negative extremal ray
\(R\), then the inequality
\[
  l(R)>\dim X-\dim W
\]
implies that \(\varphi_R\) is a projective-space bundle; see
\cite[Corollary~3.3]{fujino-length}. 
The purpose of this paper is to establish an affine local version
which does not require the total space to be \(\mathbb Q\)-factorial.

Let
\[
  \varphi\colon X\longrightarrow Y
\]
be a proper surjective equidimensional toric morphism with connected
fibers.  We write
\[
  d:=\dim X-\dim Y.
\]
For a torus-invariant Weil divisor \(D\) on \(Y\), we denote by
\[
  \varphi^{[*]}D
\]
the torus-invariant Weil divisor on \(X\) obtained by pulling back
\(D\) over the smooth locus of \(Y\) and then taking its closure in
\(X\).  In particular, we define the toric relative canonical divisor
by
\[
  K_{X/Y}:=K_X-\varphi^{[*]}K_Y.
\]
If \(K_Y\) is \(\mathbb Q\)-Cartier, then
\(\varphi^{[*]}K_Y=\varphi^*K_Y\), and this agrees with the usual
relative canonical divisor.

Our main result is an affine criterion for \(\varphi\) to be a trivial
projective-space bundle.  More precisely, Theorem~\ref{thm:affine-local}
states that if \(Y\) is affine, \(K_{X/Y}\) is \(\mathbb Q\)-Cartier,
and
\[
  -K_{X/Y}\cdot C>d
\]
for every complete integral curve \(C\subset X\), then there is an
equivariant isomorphism over \(Y\)
\[
  X\simeq\mathbb P^d\times Y.
\]
Thus, although neither \(X\) nor \(Y\) is assumed to be
\(\mathbb Q\)-factorial, the relative numerical condition forces the
morphism to have the simplest possible toric structure.

We briefly explain the main steps of the proof.  First, the numerical
assumption implies that the general fiber of \(\varphi\) is
\(\mathbb P^d\).  Hence the fan in the kernel of the corresponding
lattice homomorphism is the standard fan of \(\mathbb P^d\).

Second, we study the inverse image of the toric chart associated to a
ray of the fan of \(Y\).  After separating a torus factor, the resulting
morphism is a toric morphism to \(\mathbb A^1\).  A relative version of
the long-ray argument of the authors shows that each ray of the fan
of \(Y\) has exactly one lift to the fan of \(X\), and that a primitive
generator is mapped to the primitive generator of the corresponding
ray of the base.

Third, equidimensionality and properness force these lifted rays to
form a horizontal subfan.  Moreover, at the level of real cones, the
fan of \(X\) is the sum of this horizontal subfan and the standard fan
of \(\mathbb P^d\).  In other words, the fan is weakly split.

It remains to rule out a finite lattice-index defect.  If the
horizontal lattice were not saturated, then one of the standard walls
of the fiber fan would determine a complete torus-invariant curve
\(C\subset X\) such that
\[
  -K_{X/Y}\cdot C
  =
  \frac{d+1}{q}
  \leq d
\]
for some integer \(q\geq2\).  This contradicts the numerical
assumption.  The fan is therefore split in the integral sense, and the
standard split-fan criterion gives
\[
  X\simeq\mathbb P^d\times Y
\]
over the affine toric variety \(Y\).

As an application of the affine theorem, we obtain a global
projective-space-bundle statement for equidimensional toric
contractions satisfying the corresponding relative length condition.
The relative Picard number is not needed in the affine theorem itself.
In the global extremal setting, it is used to ensure that every
complete curve contained in the inverse image of an affine toric chart
belongs to the same contracted extremal ray and hence satisfies the
required numerical inequality.

Finally, we give an example showing that equidimensionality is
essential.  Without this assumption, a torus-invariant divisor may be
contained in a special fiber, and the dimension of that fiber may be
larger than the dimension of the general fiber, even when the relevant
relative length is larger than the relative dimension.

\begin{ack}
The first author was partially supported by JSPS KAKENHI Grant 
Number JP23K20787. The second author was partially supported by JSPS KAKENHI Grant Number 24K06679. 

The authors acknowledge the use of Rethlas and ChatGPT during the development of this work. These tools were used in the exploratory phase of the research, including discussions of formulations and the construction of examples. All mathematical arguments, proofs, and final verifications were carried out independently by the authors, who assume full responsibility for the contents of this paper.
\end{ack}

\section{Preliminaries}

We work over an algebraically closed field. 
We use the standard notation and 
terminology of toric geometry and toric Mori 
theory. For details, see, for 
example, \cite{Oda}, \cite{fulton}, 
\cite{cox-little-toric} and \cite[Chapter 14]{matsuki}. 
Throughout the paper, a curve means an integral curve. 

\smallskip

We first recall some known results. Lemma \ref{ext:fujino_toric_cone} and Lemma \ref{f-thm3.2.1} extract the necessary parts from \cite[Theorem 0.1]{fujino-notes} and \cite[Theorem 3.2.1]{fujinosato-notes2}, respectively.

\begin{lem}[{\cite[Theorem 0.1]{fujino-notes}}]
\label{ext:fujino_toric_cone}
Let \(X\) be a \(\mathbb{Q}\)-Gorenstein projective toric variety of dimension \(n\), and let \(R\subset \NE(X)\) be a \(K_X\)-negative extremal ray. Then
\[
l(R)\leq n
\]
unless \(X\simeq \mathbb{P}^n\), where
\[
l(R):=
\min\left\{
-K_X\cdot C
\, \left|\,
C\text{ is a complete integral curve and }[C]\in R
\right.\right\}
\]
is the length of \(R\).
\end{lem}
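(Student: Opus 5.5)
The plan is to reduce to a statement about torus-invariant curves and then run a combinatorial argument on the fan of \(X\). Write \(X=X(\Sigma)\) with \(\Sigma\) a complete fan in \(N_{\mathbb R}\simeq\mathbb R^n\). Since \(X\) is projective toric, \(\NE(X)\) is generated by the classes of torus-invariant curves \(C_\tau=V(\tau)\), where \(\tau\) runs over the \((n-1)\)-dimensional cones (walls) of \(\Sigma\). Consequently the extremal ray \(R\) contains some \([C_\tau]\), and \(l(R)\) is attained on such a curve. Indeed, any integral curve with class in \(R\) is numerically a positive combination of invariant curves whose classes all lie in \(R\), by extremality. After rescaling, the minimal value of \(-K_X\) on \(R\) is governed by the invariant curves, so it suffices to bound \(-K_X\cdot C_\tau\) for one suitable wall \(\tau\) with \([C_\tau]\in R\).

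For such a wall \(\tau=\sigma\cap\sigma'\), with maximal cones \(\sigma,\sigma'\), I would use the wall relation. Passing to a \(\mathbb Q\)-factorial setting is not available, so I work with \(\mathbb Q\)-Cartier \(K_X\) directly. On each maximal cone, \(-K_X\) is given by a linear function \(m_\sigma\in M_{\mathbb Q}\) with \(\langle m_\sigma,v_\rho\rangle=1\) for all rays \(\rho\subset\sigma\). Then \(-K_X\cdot C_\tau\) equals \(\langle m_\sigma-m_{\sigma'},u\rangle\), suitably normalized, where \(u\in N\) is a lattice point of \(\sigma'\) mapping to a generator of \(N/N_\tau\) oriented toward \(\sigma'\). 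A useful way to organize the computation is the contraction \(\varphi_R\). On the \(\mathbb T\)-invariant curves of \(R\), there are primitive vectors \(v_0,\dots,v_k\) (rays of \(\sigma,\sigma'\) not in \(\tau\), together with rays in \(\tau\) that are not orthogonal) satisfying a relation \(\sum a_iv_i\in\operatorname{span}(\text{rest})\), with \(a_i>0\) for those spanning the "fiber" direction. The length is then \(\sum a_i\) divided by the appropriate index. The cleanest route, following Fujino's proof, is to choose the wall so that the relation \(\sum_{i=0}^{k} a_i v_i=0\) (modulo the face) holds with \(a_i\) positive integers. One then gets \(-K_X\cdot C_\tau\le k+1\le n+1\), with equality forcing \(k=n\), all \(a_i=1\), and \(\{v_0,\dots,v_n\}\) a lattice basis relation.

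The key step is the equality case. Suppose \(l(R)>n\), so \(l(R)=n+1\) by the integrality bound above. Then \(\varphi_R\) must be a Fano contraction onto a point, the relation is \(v_0+\dots+v_n=0\) with \(v_1,\dots,v_n\) a \(\mathbb Z\)-basis, and the fan \(\Sigma\) must consist exactly of the cones spanned by proper subsets of \(\{v_0,\dots,v_n\}\), so that \(X\simeq\mathbb P^n\). I expect this to be the main obstacle. Without \(\mathbb Q\)-factoriality the rays of \(\Sigma\) need not form simplicial cones. One must show that the curve-length bound \(-K_X\cdot C\le(\text{number of rays in the relation})/(\text{index})\) cannot reach \(n+1\) unless \(\Sigma\) has exactly \(n+1\) rays. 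My first attempt would be to take a minimal-length invariant curve and argue as in Fujino's notes. If some maximal cone is non-simplicial, or there are more than \(n+1\) rays, then there is a wall whose relation involves at most \(n\) positive coefficients, or a nontrivial index \(q\ge2\). In either case this gives a curve in \(R\) with \(-K_X\cdot C\le n\).

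Finally, I would handle the boundary case \(\dim\) of fibers less than \(n\). In that case the relation involves at most \(\dim(\text{fiber})+1\le n\) positive terms, so \(l(R)\le n\) immediately. Combining the two cases gives the bound \(l(R)\le n\) unless \(X\simeq\mathbb P^n\), which is Lemma~\ref{ext:fujino_toric_cone}.
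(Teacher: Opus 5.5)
\paragraph{There is a genuine gap.} The paper gives no proof of this lemma; it quotes it from \cite[Theorem~0.1]{fujino-notes}. Your sketch has the right ingredients (invariant curves, wall relations, the positive part of a relation), but the gap sits exactly where the content of the statement lies.

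The step ``\(l(R)>n\), so \(l(R)=n+1\) by the integrality bound'' has no basis. For \(\mathbb Q\)-Gorenstein \(X\) the degrees \(-K_X\cdot C\) are only rational: on \(\PP(2,3,5)\) the invariant curve of the weight-\(2\) ray has \(-K_X\cdot C=2/3\). So excluding \(n<l(R)<n+1\) is precisely the clause ``unless \(X\simeq\PP^n\)'', and it must be proved. Invariant curves really do fall in that interval: on \(\PP(2,2,3,3,3)\), with \(n=4\), the wall omitting two weight-\(3\) rays gives \(-K_X\cdot C=13/3\).

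Your estimate \(-K_X\cdot C_\tau\le k+1\) also fails for a general wall, even when the relation has positive integer coefficients. On \(\PP(1,1,2)\), with \(v_0+v_1+2v_2=0\), the conic \(V(\Cone(v_2))\) has \(-K_X\cdot C=4>3\). In the \(\mathbb Q\)-factorial setting, what works is this: choose the wall so that a ray with the largest positive coefficient \(a_{\max}\) lies off it, let \(\sigma'\) be the adjacent cone containing that ray, and use \(\operatorname{mult}(w)\le\operatorname{mult}(\sigma')\). This gives \(-K_X\cdot C\le\sum_{a_i>0}a_i/a_{\max}\le\#\{i\mid a_i>0\}\), which disposes of the case of at most \(n\) positive coefficients.

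The remaining case needs an actual argument. There all \(n+1\) coefficients are positive, so \(W\) is a point and \(X\) is a fake weighted projective space. One route:
\begin{itemize}
\item Use \(-K_X\cdot C_{jk}=\sum_l a_l\operatorname{mult}(w_{jk})/(c\,a_ja_k)\le\sum_l a_l/\operatorname{lcm}(a_j,a_k)\), where \(c=[N:\sum_l\ZZ v_l]\).
\item Primitivity of the \(v_l\) makes any \(n\) of the weights coprime. This yields a curve with \(-K_X\cdot C\le n\) unless all \(a_l=1\).
\item If all \(a_l=1\) but \(c\ge2\), some wall has \(\operatorname{mult}(w)\le c/2\), which again gives \(-K_X\cdot C\le n\).
\end{itemize}
Your dichotomy ``at most \(n\) positive coefficients, or an index \(q\ge2\)'' is only asserted, and it does not address unequal weights.

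\paragraph{The \(\mathbb Q\)-factorial reduction.} You also assert that passing to a \(\mathbb Q\)-factorial model is unavailable. It is available, and it is what makes wall relations meaningful. Every toric variety has a small projective \(\mathbb Q\)-factorialization \(\pi\colon\widetilde X\to X\); this is one of the results of \cite{fujino-notes}, and the paper uses one in Step~3 of Theorem~\ref{thm:A1-version}. The reduction goes as follows:
\begin{itemize}
\item Since \(K_{\widetilde X}=\pi^*K_X\), a \(K_{\widetilde X}\)-negative extremal ray \(\widetilde R\) of \(\NE(\widetilde X/W)\) contains no \(\pi\)-exceptional curve.
\item \(\pi\) maps curves in \(\widetilde R\) onto curves in \(R\) with \(-K_X\cdot\pi(\widetilde C)\le-K_{\widetilde X}\cdot\widetilde C\), so \(l(R)\le l(\widetilde R)\).
\item If \(\widetilde X\simeq\PP^n\), then \(X\simeq\PP^n\).
\end{itemize}
Without this reduction your argument has no footing. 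Maximal cones (and, for \(n\ge4\), walls) need not be simplicial, so the rays of \(\sigma\cup\sigma'\) can satisfy several independent linear relations, and ``the'' wall relation is undefined. The individual \(D_\rho\) need not be \(\mathbb Q\)-Cartier, so the numbers \(D_\rho\cdot C_\tau\) behind your ``(number of rays)/(index)'' estimate make no sense. Only \((m_\sigma-m_{\sigma'})(u)\) is available, and you never extract a bound from it.
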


\begin{lem}[{\cite[Theorem 3.2.1]{fujinosato-notes2}}]
\label{f-thm3.2.1}
Let $f\colon X\to Y$ be a projective toric morphism with $\dim X=n$. 
Assume that $K_X$ is $\mathbb Q$-Cartier. 
Let $R$ be a $K_X$-negative 
extremal ray of $\NE(X/Y)$ and let $\varphi_R\colon X\to W$ 
be the contraction morphism associated to $R$. 
Assume that $\varphi_R$ is birational. 
Then we obtain 
\[
l(R)<e+1, 
\]
where \[e=\max_{w\in W} \dim \varphi^{-1}_R(w)\leq n-1. \] 
When $e=n-1$, we have a sharper inequality 
\[
l(R)\leq e=n-1. 
\]
\end{lem}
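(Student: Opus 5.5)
The plan is to reduce to a computation with torus-invariant curves and the linear relation among the rays of two adjacent maximal cones, following the toric Mori theory of Reid as extended to the non-$\mathbb Q$-factorial case by Fujino. Since $\varphi_R$ is birational and equivariant, we may replace $W$ by an affine toric chart $W_0$ containing a point $w$ with $\dim\varphi_R^{-1}(w)=e$. We then work with $X_0=\varphi_R^{-1}(W_0)\to W_0$: its fan $\Delta_0$ is a subdivision of the single cone $\sigma_W$ defining $W_0$. Every complete torus-invariant curve $C$ in $X_0$ is contracted, so $[C]\in R$. Moreover, $l(R)$ is computed by such a torus-invariant curve, because $\NE$ of a toric variety is spanned by invariant curves. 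Thus it suffices to find one invariant curve $C$ with $-K_X\cdot C<e+1$, and with $-K_X\cdot C\le n-1$ when $e=n-1$.

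Next, choose a wall $\tau=\sigma\cap\sigma'$ of $\Delta_0$, an $(n-1)$-dimensional cone, lying over the cone of $\sigma_W$ corresponding to the orbit of $w$. Let $C=V(\tau)$. If $X$ is not $\mathbb Q$-factorial, first pass to a small projective $\mathbb Q$-factorialization $X'\to X$ obtained by simplicially subdividing without adding rays. Because $K_X$ is $\mathbb Q$-Cartier, $K_{X'}$ is its pullback, and a curve $C'$ on $X'$ mapping onto $C$ satisfies $-K_{X'}\cdot C'=-K_X\cdot C$ up to the positive degree factor. This lets us assume $\sigma,\sigma'$ simplicial with rays $v_1,\dots,v_{n-1}$ spanning $\tau$ and $v_n,v_{n+1}$ the remaining ones. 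We then write the wall relation
\[
a_nv_n+a_{n+1}v_{n+1}+\sum_{i=1}^{n-1}a_iv_i=0,\qquad a_n,a_{n+1}>0,
\]
normalized so that $D_i\cdot C$ is proportional to $a_i$. Then $-K_X\cdot C=\sum_i D_i\cdot C$ is proportional to $\sum a_i$. Both $D_n\cdot C$ and $D_{n+1}\cdot C$ are at most $1$, by the usual multiplicity computation: $D_n\cdot C=\operatorname{mult}\tau/\operatorname{mult}\sigma'\le1$. Each $v_i$ with $a_i>0$ has $D_i\cdot C\le1$ by the same computation.

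The key step is to bound the number of positive coefficients and to show that at least one coefficient is negative. The rays $v_i$ with $a_i>0$ span, modulo the rays with $a_i\le0$, the directions of the fiber through the orbit $V(\tau)$. Reid's description of the exceptional locus of a birational extremal contraction gives a precise picture. The exceptional locus containing $C$ is $V(\langle v_i : a_i<0\rangle)$. The fibers over its image have dimension $\#\{i: a_i>0\}-1$. Hence $\#\{i:a_i>0\}\le e+1$, giving $-K_X\cdot C\le\sum_{a_i>0}D_i\cdot C\le e+1$. Birationality means the relation cannot have only nonnegative coefficients: otherwise the positive rays would span a linear subspace, forcing a fibration. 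So some $a_j<0$, contributing a strictly negative term $D_j\cdot C<0$, and therefore $-K_X\cdot C<e+1$.

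For the sharper case $e=n-1$, exactly $n$ coefficients are positive and exactly one, say $a_1$, is negative, with the divisor $D_1$ contracted to a point. Here I would use the integral structure. Rescale to a primitive integral relation; then $-K_X\cdot C=(\sum a_i)/a_n\cdot D_n\cdot C$ with $a_1\le-1$. Then apply the bound $D_i\cdot C\le1$ more carefully, showing that the positive contributions sum to at most $n$ while the negative term is at least $1$ in absolute value after the normalization forced by $\sigma'$. I expect this last integrality step to be the main obstacle, since in the non-$\mathbb Q$-factorial situation the multiplicities must be tracked carefully; I would first try to choose the wall $\tau$ so that $D_1\cdot C$ is as negative as possible relative to $\operatorname{mult}\sigma'$.
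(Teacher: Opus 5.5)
The paper does not prove this lemma; it quotes it from Fujino--Sato, so I judge your sketch on its own merits. It has genuine gaps.

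\textbf{The first inequality.} Your claim that every $v_i$ with $a_i>0$ has $D_i\cdot C\le1$ ``by the same computation'' is false for rays lying in the wall. Only the two rays outside $\tau$ satisfy $D\cdot C=\operatorname{mult}(\tau)/\operatorname{mult}(\tau+\RR_{\ge0}v)\le1$. For $v_i\in\tau$ you only have $D_i\cdot C=(a_i/a_n)\,D_n\cdot C$.

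Take the weighted blow-up of $\mathbb A^3$ with weights $(1,1,2)$, so $v_1=(1,1,2)$, $e_1+e_2+2e_3-v_1=0$, $E=D_{v_1}$ and $e=2$. The wall $\Cone(v_1,e_3)$ gives $D_{e_1}\cdot C=D_{e_2}\cdot C=1$, $D_{e_3}\cdot C=2$ and $E\cdot C=-1$. Hence $-K_X\cdot C=3=e+1$, and your strict inequality fails for that curve.

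The missing step is the choice of wall. By Reid's description, for any two indices $k,l$ with $a_k,a_l>0$, the other $n-1$ rays span a wall whose curve lies in $R$, with the same relation. Take $a_k\le a_l$ to be the two largest positive coefficients. Then $D_i\cdot C=(a_i/a_l)D_l\cdot C\le D_l\cdot C\le1$ whenever $a_i>0$, and your count goes through.

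\textbf{The reduction to the $\mathbb Q$-factorial case.} As stated it is not valid. A wall of the refined fan carries no extremal relation for $X'$. So Reid's description does not apply to it, and you use that description both for $\#\{a_i>0\}=e+1$ and for the existence of a negative coefficient. Also, fibers of $X'\to W$ need not have dimension $\le e$.

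Instead, choose a $K_{X'}$-negative extremal ray $R'$ of $\NE(X'/W)$, as in Step~3 of the proof of Theorem~\ref{thm:A1-version}. Its curves are not $\pi$-exceptional, so $\pi$ is finite on the fibers of $\varphi_{R'}$. This gives $e'\le e$ and $l(R)\le l(R')$, and both assertions then transfer from $R'$ to $R$.

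\textbf{The sharper inequality for $e=n-1$.} This is not proved, and the mechanism you suggest (positive terms $\le n$, negative term $\le-1$) is wrong.

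In the example above, the well-chosen wall $\Cone(v_1,e_1)$ has $E\cdot C=-\tfrac12$ and $-K_X\cdot C=\tfrac32$. The bound comes from the positive terms being small, not from the negative term being large.

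Moreover, the wall that repairs the first part can violate the sharper bound. For weights $(3,3,2)$, the wall $\Cone(v_1,e_3)$, complementary to the two coefficients $3$, has $-K_X\cdot C=\tfrac73>2=n-1$. By contrast, $\Cone(v_1,e_2)$ gives $\tfrac76$.

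One route that works, after the reduction above:
\begin{enumerate}
\item $E$ is $\mathbb Q$-factorial with $\rho(E)=1$. Adjunction gives $(K_X+E)|_E=K_E+\operatorname{Diff}_E$ with $\operatorname{Diff}_E\ge0$, hence nef on $E$.
\item Together with $E\cdot C<0$, this gives $-K_X\cdot C<-K_E\cdot C$ for curves $C\subset E$.
\item Lemma~\ref{ext:fujino_toric_cone} applied to $E$ then produces a curve with $-K_X\cdot C<n-1$, unless $E\simeq\PP^{n-1}$.
\item If $E\simeq\PP^{n-1}$, write the images of $v_2,\dots,v_{n+1}$ in $N/\ZZ v_1$ as $m_ju_j$ with $u_j$ primitive and $\sum_ju_j=0$. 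Then $\sum_jv_j/m_j=tv_1$ with $t\operatorname{lcm}(m_j)\in\ZZ_{>0}$.
\item For a torus-invariant line $L\subset E$ one gets $-K_X\cdot L=\sum_j1/m_j-t$. This is $\le n-1$ by a short case check on how many $m_j$ exceed $1$.
\end{enumerate}
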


The following characterization of equidimensionality is well known to experts.

\begin{prop}[Equidimensionality and images of cones]
\label{prop:AK}
Let $\varphi_p\colon X_\Sigma\to Y_\Delta$ be a surjective toric morphism and let $p\colon N\to N_Y$ be the lattice homomorphism defining $\varphi_p$. Then $\varphi_p$ is
equidimensional if and only if $p_{\RR}(\sigma)$ is a cone of $\Delta$ for every
$\sigma\in\Sigma$.
\end{prop}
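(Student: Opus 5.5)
Both directions go through the orbit--cone correspondence and the description of fibers of a toric morphism over torus orbits. For a cone $\tau\in\Delta$, let $O(\tau)\subset Y_\Delta$ be the corresponding orbit, of dimension $\dim Y-\dim\tau$. A point of $X_\Sigma$ lies in $O(\sigma)$ for a unique $\sigma\in\Sigma$. Its image lies in $O(\tau)$, where $\tau$ is the smallest cone of $\Delta$ containing $p_{\RR}(\sigma)$; this cone exists because $p$ maps cones of $\Sigma$ into cones of $\Delta$. The restriction $O(\sigma)\to O(\tau)$ is an equivariant morphism of tori. It is induced by the map of quotient lattices
\[
N/(N\cap\Span\sigma)\to N_Y/(N_Y\cap\Span\tau),
\]
and it is surjective exactly when this map has finite cokernel, i.e.\ when $p_{\RR}(\Span\sigma)=\Span\tau$. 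In that case its fibers have dimension $(\dim X-\dim\sigma)-(\dim Y-\dim\tau)$.

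Hence, for $y\in O(\tau)$, $\varphi_p^{-1}(y)$ is the union of the pieces $\varphi_p^{-1}(y)\cap O(\sigma)$ over the cones $\sigma$ with $p_{\RR}(\relint\sigma)\subset\relint\tau$. This gives
\[
\dim\varphi_p^{-1}(y)=\max_{\sigma}\bigl(\dim X-\dim Y-(\dim\sigma-\dim\tau)\bigr),
\]
the maximum running over those $\sigma$ with $p_{\RR}(\relint\sigma)\subset\relint\tau$ and $p_{\RR}(\Span\sigma)=\Span\tau$. The relative-interior condition forces $p_{\RR}(\Span\sigma)\subset\Span\tau$, and surjectivity of the orbit map is what upgrades this to equality.

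\textbf{($\Leftarrow$)} Suppose $p_{\RR}(\sigma)\in\Delta$ for every $\sigma$. Given $\sigma$, put $\tau=p_{\RR}(\sigma)$; it is the smallest cone containing $p_{\RR}(\sigma)$, and $p_{\RR}(\Span\sigma)=\Span\tau$. So every orbit $O(\sigma)$ surjects onto $O(\tau)$ with fibers of dimension $d-(\dim\sigma-\dim\tau)\le d$, where $d=\dim X-\dim Y$. Thus every fiber has dimension at most $d$. Upper semicontinuity gives the reverse inequality: $\varphi_p$ is surjective, so every fiber has dimension at least $d$. Hence all fibers have pure dimension $d$, i.e.\ $\varphi_p$ is equidimensional.

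\textbf{($\Rightarrow$)} Suppose $\varphi_p$ is equidimensional. Take $\sigma\in\Sigma$ and the smallest $\tau\in\Delta$ containing $p_{\RR}(\sigma)$, so $p_{\RR}(\sigma)$ meets $\relint\tau$. We must show $p_{\RR}(\sigma)=\tau$.

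First, I show $p_{\RR}(\Span\sigma)=\Span\tau$. Otherwise $O(\sigma)$ maps to a proper closed subtorus-orbit of $O(\tau)$, of dimension $<\dim Y-\dim\tau$, with fibers of dimension $>d-(\dim\sigma-\dim\tau)$. It is cleaner to argue with the closure. The orbit closure $V(\sigma)$ maps into $V(\tau)$. Equidimensionality says that over any point the fiber has dimension exactly $d$. A torus-invariant closed subvariety $Z\subset X$ with image $W$ then satisfies $\dim Z\le\dim W+d$. Applying this with $Z=V(\sigma)$ and $W$ its image, which is an orbit closure of dimension $\operatorname{rank}$ of the image lattice, gives
\[
\dim\sigma\ \ge\ \dim p_{\RR}(\Span\sigma)\quad\text{relative to }\tau.
\]
This alone is not enough.

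The better route, and the one I would take, is to use the fact that the preimage of $\relint\tau$ under $p_{\RR}$ is covered by cones of $\Sigma$. Properness is not assumed, so I would use surjectivity instead. The fiber over a point of $O(\tau)$ is nonempty of dimension $d$. It is therefore the union of the contributions of cones $\sigma'$ with $p_{\RR}(\relint\sigma')\subset\relint\tau$ and full image span. Because $p$ maps $\Sigma$ into $\Delta$, every cone $\sigma'$ satisfies $p_{\RR}(\sigma')\subset\tau'$ for some face $\tau'$. Each point of $\relint\tau$ that is hit, together with the local structure of the affine chart $\varphi_p^{-1}(U_\tau)$, lets us apply the dimension formula to the affine toric morphism $\varphi_p^{-1}(U_\tau)\to U_\tau$. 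There the fiber over the fixed point has dimension $d$ if and only if no cone of $\Sigma$ mapping into $\tau$ has image strictly smaller in the required sense.

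\textbf{Main obstacle.} The key step is to prove that $p_{\RR}(\sigma)$ is all of $\tau$, not just a subcone meeting $\relint\tau$. I would argue by contradiction. If $p_{\RR}(\sigma)\subsetneq\tau$, pick a ray $\rho$ of $\tau$ not contained in $p_{\RR}(\sigma)$ and the face of $\sigma$ mapping into the complementary facet data. Then look at the divisor $D_\rho\subset Y$ and the preimage of its generic point in the chart of $\sigma$. I would show that there is a cone $\sigma_0\preceq\sigma$ with $p_{\RR}(\relint\sigma_0)\subset\relint\tau_0$ for a face $\tau_0\subsetneq\tau$, but $\dim\sigma_0-\dim\tau_0$ is smaller than what surjectivity over $O(\tau)$ near $O(\sigma)$ requires. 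Concretely, the orbit $O(\sigma)$ then lies over a locus where the fiber dimension jumps above $d$, contradicting equidimensionality. Making this dimension count precise is the part I expect to require the most care. In particular, one must check that a non-full image forces some orbit-closure fiber of dimension $>d$, via $\dim V(\sigma)-\dim\overline{\varphi_p(V(\sigma))}$.
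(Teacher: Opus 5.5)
The paper gives no proof of its own here; it cites \cite[Lemma~4.1]{AK}. Your ($\Leftarrow$) direction is fine. The orbit maps there are surjective regardless of the criterion you quote.

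\textbf{($\Rightarrow$) is not proved.} You stop exactly where the decisive dimension count ``requires the most care,'' and the framework you set up for it is wrong. The claim that $O(\sigma)\to O(\tau)$ is surjective exactly when $p_\RR(\Span\sigma)=\Span\tau$ is false. Since $\varphi_p$ is surjective, the dense torus of $X$ maps onto that of $Y$, so $p_\RR$ is onto. The map $N/(N\cap\Span\sigma)\to N_Y/(N_Y\cap\Span\tau)$ is induced by $N\to N_Y\to N_Y/(N_Y\cap\Span\tau)$, so it always has finite cokernel. Hence every $O(\sigma)$ maps \emph{onto} $O(\tau_\sigma)$, where $\tau_\sigma$ is the smallest cone of $\Delta$ containing $p_\RR(\sigma)$.

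Your fiber-dimension formula therefore discards exactly the cones that produce jumps. Take the blow-up of $\mathbb A^2$ at the origin with $p=\mathrm{id}$ and $\tau=\Cone(e_1,e_2)$. The ray $\sigma=\Cone(e_1+e_2)$ maps into $\operatorname{relint}\tau$ and has $p_\RR(\Span\sigma)\neq\Span\tau$. Yet $O(\sigma)\simeq\GGm$ is the open part of the exceptional curve, and your formula gives fiber dimension $0$ there. The same example shows your contradiction sketch aims at the wrong place. The only offending face lies over $\operatorname{relint}\tau$ itself, not over a proper face $\tau_0\subsetneq\tau$.

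\textbf{How to repair it.} With the correct orbit statement, the fiber over a point of $O(\tau)$ has dimension $\max\{d+\dim\tau-\dim\sigma : \tau_\sigma=\tau\}$. So equidimensionality is equivalent to $\dim\sigma\geq\dim\tau_\sigma$ for all $\sigma\in\Sigma$. What your proposal lacks is the convexity step showing that this inequality forces $p_\RR(\sigma)=\tau_\sigma$. One way is to induct on $\dim\sigma$, with $\tau=\tau_\sigma$.

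If some proper face $\sigma'$ has $\tau_{\sigma'}=\tau$, then induction gives $\tau=p_\RR(\sigma')\subset p_\RR(\sigma)$, and you are done.

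Otherwise every proper face maps into the relative boundary of $\tau$. Then $p_\RR$ is injective on $\Span\sigma$. Indeed, suppose $K:=\ker p_\RR\cap\Span\sigma\neq0$ and take $x\in\operatorname{relint}\sigma$. By strong convexity, $(x+K)\cap\sigma$ contains no line, so it has a relative boundary point. That point lies on a proper face of $\sigma$ but maps to $p_\RR(x)\in\operatorname{relint}\tau$, a contradiction.

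Injectivity gives $\dim\sigma=\dim p_\RR(\sigma)\leq\dim\tau\leq\dim\sigma$. So $p_\RR\colon\Span\sigma\to\Span\tau$ is an isomorphism sending the boundary of $\sigma$ into that of $\tau$. A full-dimensional subcone of $\tau$ whose boundary lies in the boundary of $\tau$ equals $\tau$.
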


\begin{proof}
This is the toric case of Abramovich--Karu's characterization of equidimensional
toroidal morphisms; see~\cite[Lemma~4.1]{AK}.
\end{proof}

\section{The case of a one-dimensional affine base}

The following theorem can be regarded as an affine-line analogue of 
\cite[Theorem 3.2.9]{fujinosato-notes2}. 
It is the main ingredient in the proof of 
the affine local criterion established in Section \ref{mainsection}, 
but also seems to be of independent interest. 

\begin{thm}[{The affine-line version of \cite[Theorem 3.2.9]{fujinosato-notes2}}]
\label{thm:A1-version}
Let
\(
  f\colon X\to Y
\)
be a projective surjective toric morphism with connected fibers.
Assume that $Y\simeq \mathbb A^1$, $\dim X=n\geq 2$,
and that $X$ is \(\mathbb Q\)-Gorenstein. Suppose moreover that
\[
  -K_X\cdot C>n-1
\]
for every complete integral curve \(C\subset X\).

Then there exists an equivariant isomorphism over \(Y\)
\[
  X\simeq \mathbb P^{n-1}\times Y
\]
under which \(f\) is identified with the second projection.
\end{thm}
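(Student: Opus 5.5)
The plan is to first pin down the general fiber with Lemma~\ref{ext:fujino_toric_cone}, then show that the fan of \(X\) has a single ray over the positive half-line using Lemma~\ref{f-thm3.2.1}, and finally compute intersection numbers to force the lattice data to be the trivial one.

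\textbf{Setup.} Let \(p\colon N\to N_Y=\ZZ\) be the lattice map defining \(f\). It is surjective because \(f\) has connected fibers. Let \(N_0=\ker p\), of rank \(n-1\). Since \(f\) is proper, \(|\Sigma|=p_\RR^{-1}(\RR_{\ge0})\). The cones of \(\Sigma\) lying in \(N_{0,\RR}\) form a complete fan \(\Sigma_0\), and \(\Sigma_0\) is the fan of the general fiber \(F\).

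\textbf{Step 1: the general fiber.} By adjunction, with trivial normal bundle, \(K_F=K_X|_F\). Hence \(F\) is a \(\mathbb Q\)-Gorenstein projective toric variety of dimension \(n-1\) with \(-K_F\cdot C>n-1\) for every curve \(C\subset F\). Apply Lemma~\ref{ext:fujino_toric_cone} to any \(K_F\)-negative extremal ray; such a ray exists since \(-K_F\) is positive on all curves. This gives \(F\simeq\PP^{n-1}\). So \(\Sigma_0\) is the standard fan with primitive generators \(v_0,\dots,v_{n-1}\) satisfying \(\sum v_i=0\).

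\textbf{Step 2: the relative Picard number is one.} Every complete curve of \(X\) lies in a fiber. Take any \(K_X\)-negative extremal ray \(R\) of \(\NE(X/Y)\). If its contraction is birational, Lemma~\ref{f-thm3.2.1} gives \(l(R)\le n-1\): if \(e=n-1\) this is the sharper inequality, and if \(e\le n-2\) then \(l(R)<e+1\le n-1\). This contradicts the hypothesis.

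So every such contraction \(\varphi_R\colon X\to W\) is of fiber type. It then contracts curves in the general fibers \(F\simeq\PP^{n-1}\). All curves in \(F\) are numerically proportional, so \(\varphi_R\) contracts each general fiber entirely, and \(W\to Y\) is finite and birational, hence \(W=Y\). Since \(\NE(X/Y)\) is \(K_X\)-negative and polyhedral, it follows that \(\rho(X/Y)=1\).

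Next, let \(u_1,\dots,u_k\) be the rays of \(\Sigma\) with \(p>0\); they correspond to the components \(D_i\) of \(f^{-1}(0)\). Suppose \(k\ge2\). Then \(D_1\) is trivial on curves in \(F\). On the other hand, some torus-invariant curve in \(f^{-1}(0)\) meets \(D_1\) properly but is not contained in it. Because \(D_1\) and the remaining components do not form a pullback, there is also a curve on which \(D_1\) is negative, by the toric Zariski-lemma argument applied to the walls containing \(u_1\). This contradicts \(\rho(X/Y)=1\). In the argument I would either first pass to a small \(\mathbb Q\)-factorialization, or argue directly with the fan. Hence \(k=1\).

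With \(k=1\), the rays of \(\Sigma\) are \(v_0,\dots,v_{n-1},u\). So the maximal cones are \(\Cone(u,v_I)\) for \(|I|=n-1\). Thus \(\Sigma\) is simplicial and splits over \(\RR\) as the cone over \(u\) plus \(\Sigma_0\).

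\textbf{Step 3: the lattice defect (main obstacle).} Write \(p(u)=m\ge1\). Let \(M\) be the sublattice generated by \(N_0\) and \(u\). I must show that \(m=1\), which is exactly \(M=N\).

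For a wall \(\tau=\Cone(u,v_J)\) with \(|J|=n-2\), let \(C\) be the corresponding torus-invariant curve in \(f^{-1}(0)\). It is separated by the two missing rays \(v_a,v_b\), and these satisfy \(v_a+v_b=-\sum_{j\in J}v_j\). I will compute \(-K_X\cdot C\) via the wall relation in \(N\otimes\mathbb Q\), keeping track of multiplicities through the index \([N:\ZZ u+\ZZ v_J+\ZZ v_a]\), which equals \(m\) up to a sublattice correction.

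The expected outcome is \(-K_X\cdot C=n/q\) for some positive integer \(q\) that is divisible by \(m\) (this is the paper's \((d+1)/q\) with \(d=n-1\)). If \(m\ge2\), then \(-K_X\cdot C\le n/2\le n-1\), since \(n\ge2\), which contradicts the hypothesis. Hence \(m=1\).

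Then \(N=N_0\oplus\ZZ u\) and \(\Sigma\) is the product of \(\Sigma_0\) and the fan of \(\mathbb A^1\). This gives \(X\simeq\PP^{n-1}\times\mathbb A^1\) equivariantly, with \(f\) identified with the second projection.

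The delicate point is the index bookkeeping in the intersection computation on a non-smooth simplicial wall. I would first check it in the case \(n=2\), where \(X\) is a weighted blow-down of \(\PP^1\times\mathbb A^1\) along the special fiber.
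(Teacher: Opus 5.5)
Your argument up to $\rho(X/Y)=1$ is sound and is essentially the paper's Steps 1--2. The passage from $\rho(X/Y)=1$ to $k=1$, however, has a real gap. It intersects the boundary component $D_1$ with curves, so it uses that $D_1$ is $\mathbb Q$-Cartier, whereas you only know that $K_X$ is. If $X$ is not $\mathbb Q$-factorial, $D_1\cdot C$ is undefined, and $\rho(X/Y)=1$ alone does not bound the number of rays over $\RR_{>0}$.

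Passing to a small $\mathbb Q$-factorialization $\pi\colon\widetilde X\to X$ does not fix this by itself. First, $\rho(X/Y)=1$ says nothing about $\rho(\widetilde X/Y)$. Second, the hypothesis $-K\cdot C>n-1$ fails on $\widetilde X$ for $\pi$-exceptional curves, so your Step~2 cannot simply be quoted there.

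The missing idea is the paper's Step~3. Since $\pi$ is small, $K_{\widetilde X}=\pi^*K_X$, so $K_{\widetilde X}$ is trivial on $\pi$-exceptional curves. Hence every curve $\widetilde C$ whose class lies in a $K_{\widetilde X}$-negative extremal ray $\widetilde R$ of $\NE(\widetilde X/Y)$ is non-exceptional, and $-K_{\widetilde X}\cdot\widetilde C=\deg(\widetilde C\to\pi(\widetilde C))\,(-K_X\cdot\pi(\widetilde C))>n-1$. Rerunning your Step~2 for $\widetilde R$ then shows that its contraction is $\widetilde f$; here note that $\pi$ is an isomorphism over the torus of $Y$, because the cones of $\Sigma_0$ are smooth. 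So $\rho(\widetilde X/Y)=1$, which forces $\pi$ to be an isomorphism. Only after this is your one-nonvertical-ray argument legitimate.

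Second, your lattice step is stated only as an expected outcome, and the expected relation is backwards. Any $n-1$ of $v_0,\dots,v_{n-1}$ form a basis of $N_0$, so every maximal cone $\Cone(u,v_I)$ has multiplicity exactly $m$. For a wall $\tau$, the curve $C=V(\tau)$ then satisfies $-K_X\cdot C=n/q$ with $q=m/\operatorname{mult}(\tau)$. Thus $q$ divides $m$, not the reverse, and $q=1$ does occur. For example, take $n=3$ and $u=v_1+2e$ with $p(e)=1$: the wall $\Cone(u,v_1)$ has multiplicity $2$ and gives $-K_X\cdot C=3>2$, so an arbitrary wall yields no contradiction.

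The wall has to be chosen, using the one ingredient you never invoke: $u$ is primitive. Write $u=\sum_{i\geq1}b_iv_i+me$. If $m\geq2$, primitivity forces $m\nmid b_i$ for some $i$. The wall spanned by $u$ and the $v_j$ with $j\neq0,i$ has multiplicity $\gcd(m,b_i)$, whence $-K_X\cdot C=n\gcd(m,b_i)/m\leq n/2\leq n-1$, the desired contradiction. This is exactly the paper's Step~4, where $v_+$ is normalized so that $0\leq b_i<a$ and a nonzero $b_i$ is ruled out.
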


\begin{proof} 
We divide the proof into four steps. 

\medskip
\noindent\textit{Step 1. Choosing an extremal ray. } We first choose a suitable extremal ray and show that its contraction is of fiber type. 

Since \(n\geq 2\), a fiber of \(f\) contains a complete curve. By
assumption,
\[
  K_X\cdot C<0
\]
for every complete curve \(C\subset X\). In particular, \(K_X\) is
not \(f\)-nef.

Since \(f\) is a projective toric morphism, the relative Kleiman--Mori cone
$\NE(X/Y)$ is a rational polyhedral cone. Hence it contains a
\(K_X\)-negative extremal ray \(R\). Let
$
  \varphi_R\colon X\to W
$ be the contraction of \(R\). Since \(\varphi_R\) is a contraction
over \(Y\), there is a projective toric morphism
$h\colon W\to Y$ such that
$f=h\circ\varphi_R$. 

Every curve whose numerical class belongs to \(R\) is complete.
Therefore the hypothesis gives
\begin{equation}\label{eq:A1-length}
  l(R)>n-1.
\end{equation}

We claim that \(\varphi_R\) is a Fano contraction, that is, $\dim W<\dim X$. Suppose that
\(\varphi_R\) is birational, and put
\[
  e:=\max_{w\in W}\dim\varphi_R^{-1}(w).
\]
By Lemma \ref{f-thm3.2.1}, if \(e\leq n-2\), then
\[
  l(R)<e+1\leq n-1,
\]
contrary to \eqref{eq:A1-length}. If \(e=n-1\), the sharper part of the same theorem
gives
\[
  l(R)\leq n-1,
\]
again contradicting \eqref{eq:A1-length}. Thus \(\varphi_R\) is not birational.
Consequently, \(\varphi_R\) is a Fano contraction.

\medskip
\noindent\textit{Step 2. Identifying the contraction. } 
We next show that the extremal contraction obtained above coincides with the original morphism. 

Let \(F\) be a general fiber of \(\varphi_R\), and put
\[
  r:=\dim F=\dim X-\dim W. 
  \]
Since \(h\colon W\to Y\) is surjective and \(\dim Y=1\), we have
$\dim W\geq 1$, 
and hence
$
  r\leq n-1$. 

The fiber \(F\) is a projective toric \(r\)-fold. Over the big torus
of \(W\), adjunction gives
\[
  K_F=K_X|_F.
\]
Thus, for every integral curve \(C\subset F\),
\begin{equation}\label{eq:A1-fiber-inequality}
  -K_F\cdot C=-K_X\cdot C>n-1\geq r.
\end{equation}

By Lemma~\ref{ext:fujino_toric_cone}, if
\(F\not\simeq\mathbb P^r\), then there exists an
integral curve \(C\subset F\) such that
\[
  -K_F\cdot C\leq r,
\]
contrary to \eqref{eq:A1-fiber-inequality}. Therefore
\[
  F\simeq\mathbb P^r.
\]
For a line \(L\subset F\simeq\mathbb P^r\), we have
\[
  -K_X\cdot L=-K_F\cdot L=r+1.
\]
The hypothesis therefore gives
$
  r+1>n-1$. 
Since \(r\leq n-1\), we obtain
$
  r=n-1$. 
Consequently,
\[
  \dim W=1.
\] 
Let \(q:N_W\to N_Y\) be the lattice homomorphism defining \(h\).
Since both \(f\) and \(\varphi_R\) have connected fibers, the
corresponding lattice homomorphisms are surjective. Hence \(q\) is
surjective. Since $N_W$ and $N_Y$ both have rank one, $q$ is an isomorphism.
Hence $h$ is a projective birational morphism between normal curves,
and therefore $h$ is an isomorphism. 
After identifying \(W\) with \(Y\), we have \(f=\varphi_R\). 

In particular,
\begin{equation}\label{eq:A1-rho-one}
  \rho(X/Y)=1.
\end{equation}

\medskip
\noindent\textit{Step 3. The $\mathbb Q$-factoriality of $X$. } 
We now prove that $X$ is $\mathbb Q$-factorial. 

Suppose that \(X\) is not \(\mathbb Q\)-factorial. Let
$
  \pi\colon \widetilde X\to X
$
be a small projective toric \(\mathbb Q\)-factorialization. Since
\(\pi\) is small and \(K_X\) is \(\mathbb Q\)-Cartier,
\begin{equation}\label{eq:A1-crepant}
  K_{\widetilde X}=\pi^*K_X.
\end{equation}
Put
\[
  \widetilde f:=f\circ\pi\colon\widetilde X\to Y.
\]

Since \(\pi\) is nontrivial and projective, 
\(\rho(\widetilde X/X)>0\), and hence 
\begin{equation}\label{eq:A1-rho-tilde}
  \rho(\widetilde X/Y)>\rho(X/Y)=1.
\end{equation}

The divisor \(K_{\widetilde X}\) is not \(\widetilde f\)-nef, so we
can choose a \(K_{\widetilde X}\)-negative extremal ray
$
  \widetilde R\subset\NE(\widetilde X/Y)$. 
A curve whose class belongs to \(\widetilde R\) is not
\(\pi\)-exceptional, because by \eqref{eq:A1-crepant} the divisor \(K_{\widetilde X}\)
has degree zero on every \(\pi\)-exceptional curve.

Let \(\widetilde C\) be an integral curve with
\([\widetilde C]\in\widetilde R\), and let
$
  C:=\pi(\widetilde C)
$
with the reduced structure. If the mapping degree of
\(\widetilde C\to C\) is \(m>0\), then
\[
  -K_{\widetilde X}\cdot\widetilde C
  =
  -\pi^*K_X\cdot\widetilde C
  =
  m(-K_X\cdot C)
  >
  n-1.
\]
Thus
\begin{equation}\label{eq:A1-length-tilde}
  l(\widetilde R)>n-1.
\end{equation}

Repeating the arguments of Steps~1 and~2 for the fixed extremal ray 
\(\widetilde R\), 
using \eqref{eq:A1-length-tilde}, we see
that this contraction is a Fano contraction and that its base is
isomorphic to \(Y\). Hence it is precisely the morphism
$
  \widetilde f\colon\widetilde X\to Y$. 
Since it is the contraction of a single extremal ray, this gives
$
  \rho(\widetilde X/Y)=1$, 
contrary to \eqref{eq:A1-rho-tilde}. Therefore \(X\) is \(\mathbb Q\)-factorial.

\medskip
\noindent\textit{Step 4. Description of the fan. } 
Finally, we determine the fan explicitly and conclude the proof. 

Let
\[
  p\colon N\longrightarrow N_Y\simeq\mathbb Z
\]
be the lattice homomorphism defining \(f\). Since \(f\) has connected
fibers, \(p\) is surjective. Put
\[
  N_F:=\ker p.
\]

The general fiber is \(\mathbb P^{n-1}\). Therefore its fan in
\((N_F)_{\mathbb R}\) has primitive ray generators
\[
  v_1,\ldots,v_n
\]
which, after choosing a basis of \(N_F\), may be written as
\begin{equation}\label{eq:A1-fiber-fan}
  v_1=e_1,\quad\ldots,\quad v_{n-1}=e_{n-1},
  \qquad
  v_n=-e_1-\cdots-e_{n-1}.
\end{equation}

Since \(X\) is \(\mathbb Q\)-factorial and \(\rho(X/Y)=1\), the
standard toric divisor sequence shows that there is exactly one
nonvertical ray of the fan of \(X\). 
Let \(v_+\) be its primitive
generator. Choose \(e_n\in N\) such that
\[
  p(e_n)=1.
\]
Then
\[
  N=N_F\oplus\mathbb Ze_n,
\]
and we may write
\begin{equation}\label{eq:A1-vplus}
  v_+
  =
  b_1e_1+\cdots+b_{n-1}e_{n-1}+ae_n,
  \qquad a>0.
\end{equation}

By replacing \(e_n\) with
\[
  e_n+c_1e_1+\cdots+c_{n-1}e_{n-1}
\]
for suitable integers \(c_i\), we may assume
\begin{equation}\label{eq:A1-bi-range}
  0\leq b_i<a
  \qquad
  (1\leq i\leq n-1).
\end{equation}

Properness of \(f\) and the description of the general fiber imply
that the maximal cones of the fan of \(X\) are
\begin{equation}\label{eq:A1-maximal-cones}
  \operatorname{Cone}
  (v_+,v_1,\ldots,\widehat v_i,\ldots,v_n),
  \qquad
  1\leq i\leq n.
\end{equation}

We claim that \(b_i=0\) for every \(i\). Suppose otherwise. After
renumbering the \(v_i\)'s, we may assume that
\[
  0<b_1<a.
\]
Let
\[
  \mu
  :=
  \operatorname{Cone}
  (v_2,\ldots,v_{n-1},v_+),
\]
where the list \(v_2,\ldots,v_{n-1}\) is empty when \(n=2\), and put
\[
  C:=V(\mu).
\]
The curve \(C\) is contained in the fiber over the origin of
\(\mathbb A^1\), and hence is complete.

Let \(D_i:=V(v_i)\) and \(D_+:=V(v_+)\). The principal divisor
relations give
\[
  D_i-D_n+b_iD_+\sim 0
  \qquad
  (1\leq i\leq n-1)
\]
and
\[
  aD_+\sim 0.
\]
Consequently,
\begin{equation}\label{eq:A1-divisor-intersections}
  D_+\cdot C=0
  \quad\text{and}\quad
  D_i\cdot C=D_n\cdot C
  \qquad
  (1\leq i\leq n-1).
\end{equation}

By the standard toric intersection formula,
\begin{equation}\label{eq:A1-intersection-multiplicity}
  D_1\cdot C
  =
  \frac{
    \operatorname{mult}(\mu)
  }{
    \operatorname{mult}
    (\operatorname{Cone}(v_1,\ldots,v_{n-1},v_+))
  }
  =
  \frac{\gcd(a,b_1)}{a}.
\end{equation}
Using
\[
  -K_X=D_1+\cdots+D_n+D_+
\]
and \eqref{eq:A1-divisor-intersections}, we obtain
\begin{equation}\label{eq:A1-anticanonical-degree}
  -K_X\cdot C
  =
  n\,\frac{\gcd(a,b_1)}{a}.
\end{equation}
Since \(0<b_1<a\), the integer \(\gcd(a,b_1)\) is a proper divisor of
\(a\), and hence
\[
  \frac{\gcd(a,b_1)}{a}\leq\frac12.
\]
Therefore
\[
  -K_X\cdot C
  \leq
  \frac n2
  \leq
  n-1,
\]
contrary to the hypothesis.

Thus $
  b_1=\cdots=b_{n-1}=0$. 
It follows from \eqref{eq:A1-vplus} that $v_+=ae_n$. 
Since \(v_+\) is primitive, we must have
$a=1$. 
Therefore $v_+=e_n$. 

By \eqref{eq:A1-fiber-fan}, \eqref{eq:A1-maximal-cones}, and \(v_+=e_n\), the fan of \(X\) is the product of the
standard fan of \(\mathbb P^{n-1}\) and the fan of \(\mathbb A^1\).
Hence there is an equivariant isomorphism over \(Y\)
\[
  X\simeq\mathbb P^{n-1}\times\mathbb A^1.
\]
Under this isomorphism, \(f\) is the second projection.
\end{proof}

Although Theorem \ref{thm:A1-version} is 
sufficient for our purposes, 
the proof suggests that its numerical assumption 
may not be optimal. 
Since this theorem is the key local ingredient of 
Theorem \ref{thm:affine-local}, 
a positive answer to the following question 
would immediately strengthen both the affine criterion and 
its global application to 
long extremal toric contractions (cf. \cite{fujino-length}).

\begin{que} 
Is it possible to weaken the assumption $-K_X\cdot C>n-1$ in Theorem \ref{thm:A1-version} to require only that $-K_X\cdot C>\frac{n}{2}$ for every complete integral curve, and $-K_X\cdot C>n-1$ for every curve \(C\) contained in a general fiber? 

Note that the condition $-K_X\cdot C>n-1$ for 
every curve $C$ contained in 
a general fiber is equivalent to the general fiber being isomorphic to $\mathbb{P}^{n-1}$. If Theorem~\ref{thm:A1-version} holds under these weaker assumptions,
then in Theorem~\ref{thm:affine-local} one may replace
\eqref{eq:curve-inequality} by the condition
\[
-K_{X/Y}\cdot C>\frac{d+1}{2}
\]
for every complete integral curve \(C\subset X\), together with
\[
-K_{X/Y}\cdot C>d
\]
for every curve \(C\) contained in a general fiber. 
\end{que}

\section{Main Theorem}\label{mainsection} 

We first explain the relative canonical divisor used throughout this section.  We fix the
standard torus-invariant canonical divisors
\[
  K_X=-\sum_{\rho\in\Sigma(1)}D_\rho,
  \qquad
  K_Y=-\sum_{\alpha\in\Delta(1)}D_\alpha.
\]
Let
\[
  \varphi=\varphi_p\colon X_\Sigma\longrightarrow Y_\Delta
\]
be a proper surjective equidimensional toric morphism with connected fibers. Note that $p\colon N\to N_Y$ is the lattice homomorphism defining $\varphi$. For $\rho\in\Sigma(1)$ and $\alpha\in\Delta(1)$, we denote by
$u_\rho\in N$ and $w_\alpha\in N_Y$ the primitive lattice generators
of $\rho$ and $\alpha$, respectively. 
Since $Y$
is normal, the complement of its smooth locus $Y_{\mathrm{sm}}$ has codimension at least
two.  Equidimensionality gives
\[
  \operatorname{codim}_X\varphi^{-1}(Y\setminus Y_{\mathrm{sm}})\geq2.
\]
Hence, for a torus-invariant Weil divisor $D$ on $Y$, the Cartier pullback of
$D|_{Y_{\mathrm{sm}}}$ on $\varphi^{-1}(Y_{\mathrm{sm}})$ has a unique closure as a
Weil divisor on $X$.  We denote this closure by
\[
  \varphi^{[*]}D.
\]
If $D$ is $\mathbb Q$-Cartier, then $\varphi^{[*]}D$ agrees with the usual pullback
$\varphi^*D$.

More explicitly, write
\[
  D=\sum_{\alpha\in\Delta(1)}a_\alpha D_\alpha.
\]
By Proposition~\ref{prop:AK}, every ray of $\Sigma$ is mapped either to the origin or to
a ray of $\Delta$. If $p(u_\rho)=m_\rho w_\alpha$ with $m_\rho>0$, 
then the coefficient of $D_\rho$ in
$\varphi^{[*]}D$ is $m_\rho a_\alpha$. A vertical divisor, that is, a divisor
corresponding to a ray contained in $\ker(p)_{\mathbb R}$, has coefficient zero in
$\varphi^{[*]}D$.

We define the toric relative canonical divisor by
\[
  K_{X/Y}:=K_X-\varphi^{[*]}K_Y.
\]
When $K_Y$ is $\mathbb Q$-Cartier, this is the usual relative canonical divisor
$K_X-\varphi^*K_Y$.

\begin{thm}[Affine local projective-space-bundle criterion]
\label{thm:affine-local}
Let
\[
  \varphi\colon X\longrightarrow Y
\]
be a proper surjective toric morphism with connected fibers.  Assume that $Y$ is an
affine toric variety and that $\varphi$ is equidimensional of relative dimension
\[
  d:=\dim X-\dim Y>0.
\]
Assume moreover that the toric relative canonical divisor $K_{X/Y}$ is
$\mathbb Q$-Cartier and that
\begin{equation}
  -K_{X/Y}\cdot C>d
  \label{eq:curve-inequality}
\end{equation}
for every complete integral curve $C\subset X$.  Then there is an equivariant
isomorphism over $Y$
\[
  X\simeq \PP^d\times Y,
\]
under which $\varphi$ is the second projection.
\end{thm}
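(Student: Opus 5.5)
I will follow the outline sketched in the introduction and reduce everything to Theorem~\ref{thm:A1-version} together with a combinatorial analysis of the fan. Write \(\varphi=\varphi_p\colon X_\Sigma\to Y_\Delta\) with \(Y_\Delta=U_\tau\) affine, and put \(N_F:=\ker p\).

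\emph{Step 1: the general fiber.} I will restrict to \(\varphi^{-1}(T_Y)\). There \(\varphi^{[*]}K_Y=0\), so \(K_{X/Y}|_F=K_F\) for the general fiber \(F\), which is the toric variety of the fan \(\Sigma_F:=\{\sigma\in\Sigma\mid\sigma\subset(N_F)_\RR\}\). Every curve in \(F\) is complete, so \(-K_F\cdot C>d\) for all curves \(C\subset F\). Applying Lemma~\ref{ext:fujino_toric_cone} to any \(K_F\)-negative extremal ray of the projective toric variety \(F\) gives \(F\simeq\PP^d\). (I need to check that \(F\) is projective; this should follow from properness over an affine base and the toric setting, or else I will replace \(F\) by a projective toric modification.) Hence \(\Sigma_F\) is the standard fan with rays \(v_1,\dots,v_{d+1}\), \(\sum v_i=0\), and \(\{v_1,\dots,v_d\}\) a basis of \(N_F\).

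\emph{Step 2: lifting rays of \(\Delta\).} Fix a ray \(\alpha\in\Delta(1)\) with primitive generator \(w_\alpha\). The toric open set \(U_\alpha\subset Y\) splits as \(\mathbb A^1\times\GGm^{k}\). After splitting off the torus factor, \(\varphi^{-1}(U_\alpha)\) becomes a toric variety \(X_\alpha\) proper over \(\mathbb A^1\) of dimension \(d+1\). On this set \(K_Y\) is Cartier, so \(K_{X/Y}\) differs from \(K_{X_\alpha}\) by a pullback from the base, and the hypothesis becomes \(-K_{X_\alpha}\cdot C>d\) for all complete curves. Projectivity over \(\mathbb A^1\) is automatic in this case, since the fan is simplicial-ish over a one-dimensional base. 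If it is not, I will pass to a projective \(\mathbb Q\)-factorialization as in Step 3 of Theorem~\ref{thm:A1-version}. Theorem~\ref{thm:A1-version} then gives \(X_\alpha\simeq\PP^d\times\mathbb A^1\). Concretely, \(\alpha\) has a unique lift: a ray of \(\Sigma\) with primitive generator \(u_\alpha\) satisfying \(p(u_\alpha)=w_\alpha\), and the cones of \(\Sigma\) over \(\alpha\) are exactly \(\Cone(u_\alpha)+\) cones of \(\Sigma_F\).

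\emph{Step 3: weak splitting.} By Proposition~\ref{prop:AK}, every cone \(\sigma\in\Sigma\) maps onto a cone of \(\Delta\), and every nonvertical ray of \(\sigma\) is some \(u_\alpha\). I will show that \(\Sigma_H:=\{\Cone(u_\alpha\mid \alpha\in\Delta(1),\ \alpha\subset\tau')\mid\tau'\preceq\tau\}\) is a fan mapped isomorphically onto \(\Delta\) by \(p_\RR\), and that \(\Sigma=\{\eta+\gamma\mid\eta\in\Sigma_H,\ \gamma\in\Sigma_F\}\). The argument: over the relative interior of each \(\tau'\), properness and equidimensionality force the fiber of \(|\Sigma|\to|\Delta|\) to be a translate of \((N_F)_\RR\) subdivided compatibly with \(\Sigma_F\) near the rays. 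Linearity of the lift then follows from the fact that a maximal cone of \(\Sigma\) containing \(\Cone(u_\alpha\mid\alpha\subset\tau)\) has exactly \(\dim\tau\) nonvertical rays, by a dimension count. I expect this step to be the main obstacle. The hard part is showing that the horizontal part of each maximal cone is spanned by exactly the \(u_\alpha\), and not by some fiber ray mixed in. I would argue that the cones over \(\tau\) must restrict to the already determined cones over each facet ray, and combine this with convexity of \(|\Sigma|=p_\RR^{-1}(\tau)\).

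\emph{Step 4: lattice saturation.} Let \(N_H\subset N\) be the sublattice generated by the \(u_\alpha\) for \(\alpha\subset\tau\) together with \(N_F\). If \(N_H\neq N\), I will choose a basis of \(N/N_F\simeq N_Y\) and write lattice points as in \eqref{eq:A1-vplus}. A failure of integral splitting produces a maximal cone \(\Cone(u_\alpha\text{'s},v_1,\dots,\widehat v_i,\dots,v_{d+1})\) of index \(q\geq2\) relative to the product cone. For the wall curve \(C=V(\mu)\), where \(\mu\) omits two \(v_j\) and contains all \(u_\alpha\), this curve lies over the fixed point of \(Y\) and is complete. Using \(K_{X/Y}=-\sum v_j\text{-divisors}\) plus the horizontal correction, computed as in \eqref{eq:A1-divisor-intersections}–\eqref{eq:A1-anticanonical-degree}, I get \(-K_{X/Y}\cdot C=(d+1)/q\leq d\), which is a contradiction. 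Hence \(N=N_F\oplus N'\), where \(N'\) is spanned by the \(u_\alpha\) and maps isomorphically onto \(N_Y\). The fan is then the product of the fan of \(\PP^d\) with \(\Delta\), which gives \(X\simeq\PP^d\times Y\) over \(Y\).
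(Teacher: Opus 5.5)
Your outline is the paper's: general fiber, unique lifts via Theorem~\ref{thm:A1-version}, weak splitting, then saturation via a wall curve of degree $(d+1)/q$. However, Steps~3 and~4 do not go through as written.

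In Step~3 you never prove that $\widehat\tau:=\Cone(u_\alpha\mid\alpha\in\tau(1))$ is a cone of $\Sigma$ on which $p_\RR$ is injective. Your dimension count (``exactly $\dim\tau$ nonvertical rays'') is false when $\tau$ is not simplicial. Counting with Proposition~\ref{prop:AK} and strong convexity only works once you know $\dim\Span_\RR(\widehat\tau)=\dim\tau$. Without that, $\Span_\RR(\widehat\tau)$ could meet $(N_F)_\RR$, and maximal cones of the form $\widehat\tau+\Cone(\text{fewer than } d \text{ of the } v_j)$ would not be excluded. The missing idea is the fiber-dimension count over the closed orbit $O(\tau)$. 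A cone $\kappa$ with $p_\RR(\kappa)=\tau$ contributes an orbit of relative dimension $d+\dim\tau-\dim\kappa$. Since that fiber has dimension exactly $d$, some such $\kappa$ has $\dim\kappa=\dim\tau$. Then $p_\RR|_{\Span_\RR(\kappa)}$ is an isomorphism, the rays of $\kappa$ are lifts of the rays of $\tau$, and uniqueness of lifts gives $\kappa=\widehat\tau$. After that, $\operatorname{Star}_\Sigma(\widehat\tau)$ is a complete $d$-dimensional fan whose rays can only be the vertical ones, so it is the standard fan of $\PP^d$. The cones $\widehat\tau+\gamma_i$ then cover $p_\RR^{-1}(\tau)$, which gives the weak splitting.

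In Step~4 you test the wrong lattice. Your $N_H=\langle u_\alpha\rangle+N_F$ differs from $N$ whenever the $w_\alpha$ do not generate $N_Y$. For example, $X=\PP^d\times U_\tau$ with $\tau=\Cone((1,0),(1,2))$ satisfies all hypotheses and has no defect, yet $[N:N_H]=2$. So ``$N_H\neq N$'' cannot produce a contradiction, and your final claim that the lattice spanned by the $u_\alpha$ maps isomorphically onto $N_Y$ is false for singular $Y$. The correct object is the saturated lattice $L:=N\cap\Span_\RR(\widehat\tau)$, and the statement to prove is $p(L)=N_Y$. Index the vertical rays as $v_0,\dots,v_d$ with $v_0+\cdots+v_d=0$ and put $\delta_i:=\Cone(v_j\mid j\neq0,i)$. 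If $p(L)\neq N_Y$, then $N_F$ has index $>1$ in $\overline N:=N/L$. A dual-basis argument then gives an $i$ such that $v_i$ is $q_i\geq2$ times a primitive vector of $\overline N/(\overline N\cap\Span_\RR(\delta_i))$.

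The degree of $C_i:=V(\widehat\tau+\delta_i)$ also cannot be computed as in \eqref{eq:A1-divisor-intersections}--\eqref{eq:A1-anticanonical-degree}. That computation uses $\mathbb Q$-factoriality and simplicial multiplicities, which are unavailable here, since horizontal divisors need not be $\mathbb Q$-Cartier. Instead, compute $-K_{X/Y}\cdot C_i$ from the local linear functions $m_\pm$ of the $\mathbb Q$-Cartier divisor $-K_{X/Y}$ on the two adjacent maximal cones. These satisfy $(m_+-m_-)(v_i)=d+1$, so the degree is $(d+1)/q_i\leq d$.

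Finally, Lemma~\ref{ext:fujino_toric_cone} and Theorem~\ref{thm:A1-version} need $F$ and $X_\alpha\to\mathbb A^1$ to be projective. This follows from the toric relative Kleiman criterion applied to $-K_{X/Y}$, which is positive on every complete curve. Neither a ``simplicial-ish'' fan nor a modification of $F$ gives it.
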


\begin{rem}\label{rem:general-divisor}
The proof of Theorem~\ref{thm:affine-local} also gives the following slightly more
general statement.  One may replace $K_{X/Y}$ by
\[
  K_X-\varphi^{[*]}D
\]
for any torus-invariant Weil divisor $D$ on $Y$, provided that this divisor is
$\mathbb Q$-Cartier and the corresponding curve inequality holds.  Taking $D=0$ gives the
$K_X$-version, while taking $D=K_Y$ gives the relative canonical formulation stated above.

If $\varphi\colon X\to Y$ is a projective space bundle, then $K_{X/Y}$ is always Cartier. On the other hand, 
\(K_X\) need not be \(\mathbb Q\)-Cartier when
\(K_Y\) is not \(\mathbb Q\)-Cartier. 
Therefore, it seems natural to use $K_{X/Y}$ to formulate Theorem \ref{thm:affine-local}.
\end{rem}

\begin{rem}
In Theorem~\ref{thm:affine-local}, since \(Y\) is affine, every complete curve in \(X\) is contracted by
\(\varphi\). Hence \eqref{eq:curve-inequality} and the toric relative
ampleness criterion imply that \(-K_{X/Y}\) is \(\varphi\)-ample.
Therefore the proper morphism \(\varphi\) is projective. 
\end{rem}

\subsection{The general fiber}

\begin{lem}
\label{lem:general-fiber}
Under the assumptions of Theorem~\ref{thm:affine-local}, the general fiber of $\varphi$
is isomorphic to $\PP^d$.
\end{lem}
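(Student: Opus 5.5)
The plan is to restrict to the fiber over a point of the big torus of $Y$ and apply Lemma~\ref{ext:fujino_toric_cone} to that fiber. Let $T_Y\subset Y$ be the big torus and $F$ the fiber of $\varphi$ over a point $y\in T_Y$. Since $p\colon N\to N_Y$ is surjective (connected fibers), $\varphi^{-1}(T_Y)\simeq F\times T_Y$ equivariantly. Here $F$ is the toric variety of the fan $\Sigma_F:=\{\sigma\in\Sigma \mid \sigma\subset \ker(p)_{\mathbb R}\}$ in the lattice $N_F:=\ker p$. Because $\varphi$ is proper, $F$ is a complete toric variety of dimension $d$, and its fan $\Sigma_F$ is complete in $(N_F)_{\mathbb R}$. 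The divisor $\varphi^{[*]}K_Y$ restricted to $\varphi^{-1}(T_Y)$ is the pullback of $K_Y|_{T_Y}$, which is principal (indeed zero, since the torus-invariant $K_Y$ has no components meeting $T_Y$). Hence $K_{X/Y}|_{\varphi^{-1}(T_Y)}=K_X|_{\varphi^{-1}(T_Y)}$, and by adjunction for the trivial product, $K_{X/Y}|_F=K_F$. In particular $K_F$ is $\mathbb Q$-Cartier, since $K_{X/Y}$ is.

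Next I compare degrees. Every integral curve $C\subset F$ is complete, so the hypothesis \eqref{eq:curve-inequality} gives
\[
  -K_F\cdot C=-K_{X/Y}\cdot C>d .
\]
For projectivity, $-K_{X/Y}$ is $\varphi$-ample by the remark after Theorem~\ref{thm:affine-local}, so its restriction $-K_F$ is ample. Thus $F$ is a $\mathbb Q$-Gorenstein projective toric $d$-fold, and in fact a Fano one. If $d=1$, then $F$ is a complete toric curve, hence $\mathbb P^1$, and we are done. So assume $d\ge 2$. Then $\NE(F)$ is a rational polyhedral cone and $K_F$ is negative on it. Choose any extremal ray $R$ of $\NE(F)$. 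It is $K_F$-negative, and every curve with class in $R$ is a curve in $F$, so $l(R)>d=\dim F$. Lemma~\ref{ext:fujino_toric_cone} then forces $F\simeq\mathbb P^d$.

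The main point needing care is the identity $K_{X/Y}|_F=K_F$ in the non-$\mathbb Q$-factorial setting. I would verify it torically rather than through adjunction. The restriction of the torus-invariant divisor $K_X-\varphi^{[*]}K_Y$ to $F\times T_Y$ has components only along the vertical rays, that is, the rays in $\ker(p)_{\mathbb R}$, each with coefficient $-1$. This is exactly $K_{F}\times T_Y$, because horizontal rays do not meet $\varphi^{-1}(T_Y)$. The $\mathbb Q$-Cartier property of $K_F$ follows by restricting local equations. Finally, the meaning of ``general fiber'' here is any fiber over $T_Y$: all of these are isomorphic by torus translation, so proving the claim for one such fiber suffices.
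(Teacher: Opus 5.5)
Your proof is correct and follows the paper's argument. Both restrict to a fiber $F$ over the dense torus, note $K_{X/Y}|_F=K_F$ because $K_Y$ vanishes on $T_Y$, and apply Lemma~\ref{ext:fujino_toric_cone} using $-K_F\cdot C>d$. The extra checks you supply are details the paper leaves implicit, not a different route: projectivity of $F$ via the $\varphi$-ampleness of $-K_{X/Y}$, the $\mathbb Q$-Cartier property of $K_F$, and the explicit choice of a $K_F$-negative extremal ray of $\NE(F)$.
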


\begin{proof}
Let $T_Y\subset Y$ be the dense torus and let $F$ be a fiber over a point of $T_Y$.
After choosing a splitting of the exact sequence of lattices over the dense torus, the
restriction of $\varphi$ over $T_Y$ is a product with fiber $F$.  Since the standard
torus-invariant canonical divisor $K_Y$ restricts to zero on $T_Y$, we obtain
\[
  K_{X/Y}|_F=K_F.
\]
Every curve $C\subset F$ is complete in $X$, and hence
\[
  -K_F\cdot C=-K_{X/Y}\cdot C>d.
\]
Lemma~\ref{ext:fujino_toric_cone} now implies that $F\simeq\PP^d$.
\end{proof}

Since \(\varphi\) has connected fibers, 
\(p\colon N\to N_Y\) is surjective. 
Put \(N_F:=\ker(p)\). By Lemma~\ref{lem:general-fiber}, the fan in $(N_F)_{\RR}$ is the standard fan of
$\PP^d$.  We choose its primitive ray generators
\[
  v_0,v_1,\ldots,v_d\in N_F
\]
so that $v_1,\ldots,v_d$ is a basis of $N_F$ and
\begin{equation}
  v_0+v_1+\cdots+v_d=0.
  \label{eq:Pd-relation}
\end{equation}
For $0\leq i\leq d$, put
\[
  \gamma_i:=\Cone(v_0,\ldots,\widehat v_i,\ldots,v_d).
\]

\subsection{Unique primitive lifts of the rays of the base}

We now return to Theorem~\ref{thm:affine-local}.  By separating a torus factor, we may
assume that
\[
  Y=U_\tau
\]
for a full-dimensional strongly convex cone $\tau\subset (N_Y)_{\RR}$.

\begin{lem}[Unique primitive lift]
\label{lem:unique-lifts}
For every ray $\alpha\in\Delta(1)$, there is exactly one ray
$\rho_\alpha\in\Sigma(1)$ whose image is contained in $\alpha$ and is nonzero.  If
$w_\alpha$ and $u_{\rho_\alpha}$ are the primitive generators, then
\[
  p(u_{\rho_\alpha})=w_\alpha.
\]
The rays contained in $\ker(p)_{\RR}$ are exactly
$\RR_{\geq0}v_0,\ldots,\RR_{\geq0}v_d$.
\end{lem}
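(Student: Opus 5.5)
The plan is to reduce to Theorem~\ref{thm:A1-version} by localizing over the toric chart of a single ray. Fix $\alpha\in\Delta(1)$ and let $U_\alpha\subset Y$ be the corresponding open affine toric chart. Since $U_\alpha\simeq\mathbb A^1\times\GGm^{\dim Y-1}$, we choose a splitting $N_Y=\ZZ w_\alpha\oplus N'$ and lift $N'$ to $N$. Then $\varphi^{-1}(U_\alpha)$ is the toric variety of the subfan $\Sigma_\alpha:=\{\sigma\in\Sigma : p_\RR(\sigma)\subset\alpha\}$; by Proposition~\ref{prop:AK} these are exactly the cones mapping to $0$ or onto $\alpha$. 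After splitting off the torus factor, $\varphi^{-1}(U_\alpha)\simeq X_\alpha\times\GGm^{\dim Y-1}$ with $f_\alpha\colon X_\alpha\to\mathbb A^1$ proper, surjective, with connected fibers, and $\dim X_\alpha=d+1\geq2$. Here $X_\alpha$ has fan $\Sigma_\alpha$ viewed in $N/N'_{\mathrm{lift}}\simeq p^{-1}(\ZZ w_\alpha)$. Projectivity follows from the remark after Theorem~\ref{thm:affine-local}, which applies to restrictions as well.

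Next we verify the hypotheses of Theorem~\ref{thm:A1-version} for $f_\alpha$. On $U_\alpha$ the standard $K_Y$ is $-D_\alpha$, which is Cartier, so $\varphi^{[*]}K_Y$ is the genuine pullback there. Hence $K_{X_\alpha}=K_{X/Y}|_{X_\alpha}+f_\alpha^*(\text{Cartier})$ is $\mathbb Q$-Cartier, i.e.\ $X_\alpha$ is $\mathbb Q$-Gorenstein. Every complete curve $C\subset X_\alpha$ lies in a fiber of $f_\alpha$, and the pullback of a Cartier divisor from the base has degree zero on it. Therefore
\[
-K_{X_\alpha}\cdot C=-K_{X/Y}\cdot C'>d=\dim X_\alpha-1,
\]
where $C'$ is the curve $C\times\{t\}$ in $X$. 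One has to check that restricting to a slice $X_\alpha\times\{t\}$ is compatible with intersection numbers; this is fine because the product structure is equivariant and $\GGm$-factors contribute no boundary divisors. Theorem~\ref{thm:A1-version} then yields $X_\alpha\simeq\PP^d\times\mathbb A^1$ over $\mathbb A^1$.

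Reading this back on fans, $\Sigma_\alpha$ has exactly $d+1$ vertical rays, namely those of the general fiber $\RR_{\geq0}v_0,\ldots,\RR_{\geq0}v_d$ from Lemma~\ref{lem:general-fiber}, and exactly one nonvertical ray $\rho_\alpha$. Its primitive generator maps to the generator of $N_{\mathbb A^1}$, i.e.\ $p(u_{\rho_\alpha})=w_\alpha$. Since every ray of $\Sigma$ with nonzero image in $\alpha$ belongs to $\Sigma_\alpha$, this gives uniqueness and the primitivity statement.

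For the final claim, every ray $\rho$ of $\Sigma$ either maps to $0$ or maps onto some $\alpha\in\Delta(1)$ (Proposition~\ref{prop:AK}). A ray mapping to $0$ lies in $\Sigma_\alpha$ for any $\alpha$, and is therefore one of the $v_i$. Such an $\alpha$ exists because $\tau$ is full-dimensional and $\dim Y>0$; if $\dim Y=0$ the statement is just Lemma~\ref{lem:general-fiber}. Conversely, each $v_i$ spans a ray of $\Sigma$, since the fan over the torus is the fiber fan.

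The main obstacle I expect is the bookkeeping in the torus-splitting step: identifying $\varphi^{-1}(U_\alpha)$ precisely as a product and making sure that $K_{X_\alpha}$ differs from $K_{X/Y}$ only by the pullback of a Cartier divisor. The coefficient description of $\varphi^{[*]}$ given before the theorem, with multiplicities $m_\rho$, is exactly what makes this work on $U_\alpha$.
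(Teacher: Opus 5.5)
Your proposal is correct and takes essentially the same approach as the paper. You restrict to $\varphi^{-1}(U_\alpha)$, split off the torus factor, use that $K_Y|_{U_\alpha}=-D_\alpha$ is Cartier so the curve inequality for $K_{X/Y}$ transfers to $K_{X_\alpha}$, apply Theorem~\ref{thm:A1-version}, and read off the unique horizontal ray and $p(u_{\rho_\alpha})=w_\alpha$ from the product fan. Your route to the vertical-ray claim through $\Sigma_\alpha$ is harmless but unnecessary: the cones of $\Sigma$ in $\ker(p)_{\RR}$ are exactly the fan of the general fiber, which is how the paper concludes.
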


\begin{proof}
Fix a ray $\alpha\in\Delta(1)$, with primitive generator $w_\alpha$.  The toric open
subset $U_\alpha\subset Y$ is noncanonically isomorphic to
\[
  \mathbb A^1\times(\GGm)^{\dim Y-1}.
\]
Put
\[
  N_\alpha:=p^{-1}(\ZZ w_\alpha).
\]
The part of the fan of $X$ lying over $U_\alpha$ is contained in
$(N_\alpha)_{\RR}$.  After choosing a complementary lattice, we obtain a product
decomposition
\[
  \varphi^{-1}(U_\alpha)
  \simeq V_\alpha\times(\GGm)^{\dim Y-1},
\]
where
\[
  f_\alpha:V_\alpha\longrightarrow\mathbb A^1
\]
is a projective surjective equidimensional toric morphism with connected fibers and
relative dimension $d$.

The variety $U_\alpha$ is smooth.  Hence $K_Y|_{U_\alpha}$ is Cartier, and on
$\varphi^{-1}(U_\alpha)$ we have
\[
  K_X=K_{X/Y}+\varphi^*(K_Y|_{U_\alpha}).
\]
It follows that $K_X$ is $\mathbb Q$-Cartier on this open set, and therefore
$V_\alpha$ is $\mathbb Q$-Gorenstein.  If $C\subset V_\alpha$ is a complete curve,
then $C\times\{1\}$ is a complete curve in $X$.  Since it is vertical over
$U_\alpha$, the Cartier divisor $\varphi^*(K_Y|_{U_\alpha})$ has degree zero on it.
Consequently,
\[
  -K_{V_\alpha}\cdot C
  =-K_X\cdot(C\times\{1\})
  =-K_{X/Y}\cdot(C\times\{1\})
  >d.
\]
Theorem~\ref{thm:A1-version} gives
\[
  V_\alpha\simeq\PP^d\times\mathbb A^1.
\]
The fan of this product contains exactly the $d+1$ vertical rays and one horizontal ray,
and the primitive generator of the horizontal ray maps to $w_\alpha$.  This proves the
claim for every $\alpha$.  The assertion about the vertical rays follows from the fan of
the general fiber.
\end{proof}

\subsection{The weak splitting of the fan}

The next lemma shows that, once the rays have been controlled, the real cone structure is
forced by equidimensionality and properness.

\begin{lem}[Weak splitting]
\label{lem:weak-splitting}
For every face $\sigma\preceq\tau$, set
\[
  \widehat\sigma
  :=\Cone\bigl(u_{\rho_\alpha}\mid \alpha\in\sigma(1)\bigr).
\]
Then $\widehat\sigma$ is a cone of $\Sigma$, and
\[
  p_{\RR}|_{\widehat\sigma}:\widehat\sigma\xrightarrow{\sim}\sigma
\]
is an isomorphism of real cones.  The cones $\widehat\sigma$ form a subfan
$\widehat\Delta\subset\Sigma$.

Furthermore,
\begin{equation}
  \Sigma
  =
  \left\{
    \widehat\sigma+\gamma
    \ \middle|\ 
    \sigma\preceq\tau,\ \gamma\in\Sigma_F
  \right\},
  \label{eq:weak-splitting}
\end{equation}
where $\Sigma_F$ is the standard fan of $\PP^d$.
\end{lem}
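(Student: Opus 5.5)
The plan is to combine Proposition~\ref{prop:AK} (equidimensionality means every cone of $\Sigma$ maps onto a cone of $\Delta$) with properness of $\varphi$ (the support of $\Sigma$ is $p_{\RR}^{-1}(\tau)$) and Lemma~\ref{lem:unique-lifts}.

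\emph{Step 1: the structure of each cone.} Take any $\sigma'\in\Sigma$. By Lemma~\ref{lem:unique-lifts}, its rays are either vertical rays $\RR_{\geq0}v_i$ or lifts $\rho_\alpha$. By Proposition~\ref{prop:AK}, $p_{\RR}(\sigma')$ is a face $\sigma$ of $\tau$. Its rays are the images of the horizontal rays of $\sigma'$, so the horizontal rays of $\sigma'$ are exactly $\rho_\alpha$ with $\alpha\in\sigma(1)$. Every such $\alpha$ occurs, since the image of $\sigma'$ is generated by the images of its rays and $p(u_{\rho_\alpha})=w_\alpha$. Hence
\[
\sigma'=\Cone\bigl(u_{\rho_\alpha}\mid\alpha\in\sigma(1)\bigr)+\Cone(\text{vertical rays of }\sigma')=\widehat\sigma+\gamma,
\]
where $\gamma$ is spanned by some of the $v_i$. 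Because $\sigma'$ is strongly convex, $\gamma$ cannot contain all of $v_0,\dots,v_d$, since their sum is $0$. So $\gamma$ is a cone of $\Sigma_F$, and $\Sigma\subset\{\widehat\sigma+\gamma\}$.

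\emph{Step 2: $\widehat\sigma$ is a cone of $\Sigma$ and maps isomorphically onto $\sigma$.} Pick a point $y$ in the relative interior of $\sigma$ and lift it to $\widehat y:=\sum_\alpha c_\alpha u_{\rho_\alpha}$ with $c_\alpha>0$. Properness gives a cone $\sigma'\in\Sigma$ containing $\widehat y$ in its relative interior. By Step~1, $\sigma'=\widehat{\sigma_1}+\gamma$ with $p(\sigma')=\sigma_1$, and $\sigma\subset\sigma_1$ with $y$ meeting the relative interior of $\sigma_1$, which forces $\sigma_1=\sigma$. We must then show $\gamma=0$, so that $\widehat\sigma=\sigma'$ is actually a face.

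The cleanest route is the following. Since $\widehat\sigma\subset\sigma'$, the cone $\widehat\sigma$ has no lineality, and $\dim\widehat\sigma\le\dim\sigma$ because the rays of $\widehat\sigma$ are the $u_{\rho_\alpha}$ for $\alpha\in\sigma(1)$. Injectivity of $p_{\RR}$ on $\widehat\sigma$ is the point that needs care.

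First consider the fiber $p_{\RR}^{-1}(y)\cap\sigma'$. For a generic point $x$ of $\gamma$, the point $\widehat y+x$ lies in $\sigma'$ over $y$, and likewise for a point of the interior of any other cone of $\Sigma_F$ placed on top of $\widehat y$. The fibers of $\Sigma$ over $y$ therefore form a complete fan in $(N_F)_{\RR}$ translated to pass through $\widehat y$. Properness and equidimensionality make this fiber fan the star structure of the special fiber, whose cones are of the form $\gamma$. Completeness then forces $\widehat y$ to lie in a cone with $\gamma=0$: otherwise the fiber over $y$ near $\widehat y$ is covered by cones $\widehat\sigma+\gamma$ in which $\widehat y$ is a boundary point, and these do not cover a neighborhood unless $\widehat\sigma$ itself is a common face.

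Once $\widehat\sigma\in\Sigma$, two different points of $\widehat\sigma$ with the same image would differ by a vertical vector. This contradicts the fact that the star of $\widehat\sigma$ contains all $\widehat\sigma+\gamma_i$, which are strongly convex, so $p_{\RR}|_{\widehat\sigma}$ is injective. Taking faces shows that the $\widehat\sigma$ form a subfan $\widehat\Delta$.

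\emph{Step 3: the reverse inclusion in \eqref{eq:weak-splitting}.} Over the relative interior of each $\sigma$, the fiber fan through $\widehat\sigma$ must cover $p_{\RR}^{-1}(\sigma^\circ)$. By Step~1, only the cones $\widehat\sigma+\gamma$ are available, and these cover the fiber only if all $\gamma_i$, and hence all their faces, occur. This gives equality.

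I expect the main obstacle to be Step~2: making rigorous the argument that $\widehat y$ is not swallowed in the interior of some $\widehat\sigma+\gamma$ with $\gamma\neq0$. As a fallback I would argue locally via Lemma~\ref{lem:unique-lifts} applied to the rays, and then induct on $\dim\sigma$ using the star of $\widehat\sigma$ and the fact that a complete fan in $N_F$ with rays $v_0,\dots,v_d$ is $\Sigma_F$.
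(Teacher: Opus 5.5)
Your Step~1 is correct, and it is a clean direct proof of the inclusion $\Sigma\subseteq\{\widehat\sigma+\gamma\}$. The gap is in Step~2, which is the real content of the lemma.

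The claim $\dim\widehat\sigma\le\dim\sigma$ ``because the rays of $\widehat\sigma$ are the $u_{\rho_\alpha}$ for $\alpha\in\sigma(1)$'' only holds when $\sigma$ is simplicial. Since $Y$ is not assumed $\mathbb{Q}$-factorial, $\sigma$ may have more than $\dim\sigma$ rays. Then $\dim\widehat\sigma\le\dim\sigma$ is exactly the injectivity of $p_{\RR}$ on $\Span_{\RR}(\widehat\sigma)$ that has to be proved.

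The completeness argument does not supply this. First, $\widehat y$ lies in the relative interior of $\sigma'$, not on its boundary. Second, before injectivity is known, the slices $\kappa\cap p_{\RR}^{-1}(y)$ are polyhedra $(\widehat{\sigma_1}\cap p_{\RR}^{-1}(y))+\gamma$ whose bounded part may be positive-dimensional; they are not cones of a translated fan. Indeed, if $\gamma\neq0$, write $\widehat y=a+g$ with $a\in\operatorname{relint}\widehat\sigma$ and $0\neq g\in\operatorname{relint}\gamma$. This shows directly that $p_{\RR}$ is not injective on $\widehat\sigma$. So any obstruction to $\gamma=0$ is precisely a failure of injectivity, and the heuristic does not address it.

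Your later injectivity argument is circular: it uses that all $\widehat\sigma+\gamma_i$ lie in $\Sigma$, which is Step~3. Strong convexity of those cones would not give a contradiction in any case.

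The missing ingredient is a dimension count that produces a cone of the right dimension over $\sigma$. This is how the paper argues.
\begin{itemize}
\item By Proposition~\ref{prop:AK}, the fiber over a point of $O(\sigma)$ is the union of the fibers of $O(\kappa)\to O(\sigma)$ over the $\kappa\in\Sigma$ with $p_{\RR}(\kappa)=\sigma$. These fibers have dimension $d+\dim\sigma-\dim\kappa\le d$.
\item Every fiber of the surjective morphism $\varphi$ has dimension at least $d$. Hence some $\kappa$ with $p_{\RR}(\kappa)=\sigma$ has $\dim\kappa=\dim\sigma$.
\item Then $p_{\RR}$ is injective on $\Span_{\RR}(\kappa)$, and the rays of $\kappa$ map bijectively onto those of $\sigma$. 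Lemma~\ref{lem:unique-lifts} forces $\kappa=\widehat\sigma$. This gives $\widehat\sigma\in\Sigma$ and the isomorphism onto $\sigma$ at once.
\end{itemize}
Combinatorially, the slices over $y$ subdivide the $d$-dimensional affine space $p_{\RR}^{-1}(y)$ into pointed polyhedra, whose recession cones are the strongly convex $\gamma$'s. The cone containing a vertex in its relative interior is such a $\kappa$.

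With injectivity in hand, your Step~3 also works after two corrections:
\begin{itemize}
\item The cones meeting $p_{\RR}^{-1}(y)$ are all $\widehat{\sigma_1}+\gamma$ with $\sigma_1\supseteq\sigma$, not only $\widehat\sigma+\gamma$. Their slices are $\widehat y+\gamma$.
\item You must then pass to the face $\widehat\sigma+\gamma$ of $\widehat{\sigma_1}+\gamma$.
\end{itemize}
The paper obtains the reverse inclusion differently. It uses the star of $\widehat\tau$, which is a complete fan in $N_{\RR}/\Span_{\RR}(\widehat\tau)\cong(N_F)_{\RR}$ whose only available rays are the images of $v_0,\dots,v_d$.
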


\begin{proof}
Fix a face $\sigma\preceq\tau$.  By Proposition~\ref{prop:AK}, the image of every cone
of $\Sigma$ is a cone of $\Delta$.

Consider the fiber over a point of the orbit $O(\sigma)$.  If a cone
$\kappa\in\Sigma$ maps onto $\sigma$, then the corresponding orbit has relative
dimension
\begin{equation}
  \bigl(\dim X-\dim\kappa\bigr)
  -\bigl(\dim Y-\dim\sigma\bigr)
  =d+\dim\sigma-\dim\kappa.
  \label{eq:orbit-dimension}
\end{equation}
Equidimensionality implies that this number is at most $d$, so
$\dim\kappa\geq\dim\sigma$.  Since the fiber has dimension exactly $d$, some stratum
of a $d$-dimensional irreducible component gives a cone $\kappa$ for which equality
holds.  Thus
\[
  p_{\RR}(\kappa)=\sigma,
  \qquad
  \dim\kappa=\dim\sigma.
\]
The restriction of $p_{\RR}$ to the span of $\kappa$ is therefore an isomorphism.
Hence the extremal rays of $\kappa$ map bijectively to the extremal rays of $\sigma$.
By Lemma~\ref{lem:unique-lifts}, each ray of $\sigma$ has only one possible lift.
Consequently,
\[
  \kappa=\widehat\sigma.
\]
This proves that $\widehat\sigma\in\Sigma$ and that it maps isomorphically to $\sigma$.
The same cone is forced for every face, so
\[
  \sigma'\preceq\sigma
  \quad\Longrightarrow\quad
  \widehat{\sigma'}\preceq\widehat\sigma.
\]
Thus the $\widehat\sigma$ form a subfan.

We now consider the unique maximal cone $\tau$ of the affine base.  Put
$H:=\Span_{\RR}(\widehat\tau)$.  Since $p_{\RR}|_H$ is an isomorphism, we have a
direct-sum decomposition of real vector spaces
\[
  N_{\RR}=H\oplus (N_F)_{\RR}.
\]
The orbit closure $V(\widehat\tau)$ lies in the proper fiber over the closed orbit of
$Y$, so its fan
\[
  \operatorname{Star}_\Sigma(\widehat\tau)
\]
in $N_{\RR}/H$ is complete.  Every ray of this star is represented by a ray of $\Sigma$
which is not contained in $\widehat\tau$.  By Lemma~\ref{lem:unique-lifts}, all
nonvertical rays over $\tau$ already belong to $\widehat\tau$. 
Since the star is a complete \(d\)-dimensional fan, 
all of the \(d+1\) vertical rays must occur. 
Hence the rays of the
star are precisely the vertical rays
\[
  \RR_{\geq0}v_0,\ldots,\RR_{\geq0}v_d.
\]
There is only one complete fan with these $d+1$ rays, namely the standard fan of
$\PP^d$.  Therefore
\[
  \operatorname{Star}_\Sigma(\widehat\tau)=\Sigma_F.
\]
It follows that the maximal cones containing $\widehat\tau$ are exactly
\[
  \widehat\tau+\gamma_i,
  \qquad 0\leq i\leq d.
\]
Their union is
\[
  \widehat\tau+|(\Sigma_F)|
  =\widehat\tau+(N_F)_{\RR}
  =p_{\RR}^{-1}(\tau),
\]
which is the support of the fan over $Y$ by properness.

Finally, every cone of $\Sigma$ is contained in one of these maximal product cones.
Indeed, the product cones themselves are cones of $\Sigma$ and cover the support; the
relative interior of a cone of $\Sigma$ cannot be covered by finitely many proper faces.
Thus every cone is a face of some $\widehat\tau+\gamma_i$.  Because the sum is direct in
real vector spaces, every such face is of the form $\widehat\sigma+\gamma$, with
$\sigma\preceq\tau$ and $\gamma\in\Sigma_F$.  This proves
\eqref{eq:weak-splitting}.
\end{proof}

\subsection{Eliminating the lattice defect}

Although the fan is now a product at the level of real cones, the horizontal lattice may
still have finite index in the lattice of the base.  The relative anticanonical
inequality eliminates this last possibility.

\begin{lem}[Saturation of the horizontal lattice]
\label{lem:saturation}
For every face $\sigma\preceq\tau$, one has
\begin{equation}
  p(\widehat\sigma\cap N)=\sigma\cap N_Y.
  \label{eq:saturation}
\end{equation} 
In particular, \(\Sigma\) is split by
\(\widehat{\Delta}\) and \(\Sigma_F\) in the sense of
\cite[Definition~3.3.18]{cox-little-toric}. 
\end{lem}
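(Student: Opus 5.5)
We need to show that for every face $\sigma\preceq\tau$, the map $p$ sends $\widehat\sigma\cap N$ onto $\sigma\cap N_Y$. The plan is to first reduce this to a statement about the lattice generated by the lifted rays, and then to rule out any index defect by exhibiting a complete torus-invariant curve of low relative anticanonical degree, following the model of Step~4 in the proof of Theorem~\ref{thm:A1-version}.

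\textbf{Step 1: reduction to a lattice-index statement.} By Lemma~\ref{lem:weak-splitting}, $p_{\RR}$ restricts to an isomorphism $\widehat\sigma\to\sigma$. By Lemma~\ref{lem:unique-lifts}, the generators $u_{\rho_\alpha}$ map to the $w_\alpha$. Put $H_\sigma:=\Span_{\RR}(\widehat\sigma)\cap N$ and $N_{Y,\sigma}:=\Span_{\RR}(\sigma)\cap N_Y$. Then $p$ restricts to an injective homomorphism $H_\sigma\to N_{Y,\sigma}$, and \eqref{eq:saturation} is equivalent to this map being surjective. It suffices to treat $\sigma=\tau$, since faces inherit surjectivity by intersecting with $\Span_{\RR}(\sigma)$. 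Suppose, for contradiction, that the index $q:=[N_Y:p(H_\tau)]$ satisfies $q\geq 2$.

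\textbf{Step 2: locating the defect relative to the fiber lattice.} Since $H_\tau\oplus N_F$ has finite index $q$ in $N$, there is some $n\in N$ with $p(n)\notin p(H_\tau)$. Write $n=h+\sum_i c_i v_i$ with $h\in (H_\tau)_{\mathbb Q}$ and $c_i\in\mathbb Q$, where $v_1,\dots,v_d$ is the chosen basis of $N_F$. The key point is to use this element to find a full-dimensional cone $\widehat\tau+\gamma_i$ whose multiplicity strictly exceeds that of an adjacent lower-dimensional cone $\widehat\tau+(\gamma_i\cap\gamma_j)$. That cone is a wall of the fiber fan, so $C:=V(\widehat\tau+(\gamma_i\cap\gamma_j))$ is a complete torus-invariant curve lying in the fiber over the closed orbit.

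\textbf{Step 3: computing the degree of that curve.} Since $K_{X/Y}$ is $\mathbb Q$-Cartier, I would compute $-K_{X/Y}\cdot C$ via its local Cartier data on the two adjacent maximal cones. On $\widehat\tau+\gamma_i$ this data is represented by a linear functional $m_i\in M_{\mathbb Q}$ satisfying $\langle m_i,v_k\rangle=1$ on the vertical rays in the cone and $\langle m_i,u_{\rho_\alpha}\rangle=0$ on the horizontal rays. The vanishing on horizontal rays holds because the coefficient of $D_{\rho_\alpha}$ in $K_X-\varphi^{[*]}K_Y$ is $-1+1=0$, using $p(u_{\rho_\alpha})=w_\alpha$. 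Hence $m_i$ vanishes on $H_\tau$ and is determined by the $\mathbb P^d$ data. The wall relation $v_i+v_j+\sum_{k\neq i,j}v_k=0$ lifts to $N$ only up to the index defect. In the toric intersection formula, the multiplicity ratio then contributes a factor $1/q'$ for some $q'\geq 2$ dividing $q$, which gives
\[
  -K_{X/Y}\cdot C=\frac{d+1}{q'}\leq d
\]
(note $d\geq 1$). This contradicts \eqref{eq:curve-inequality}, exactly as in \eqref{eq:A1-anticanonical-degree}.

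\textbf{Conclusion and main obstacle.} Once surjectivity holds, $N=H_\tau\oplus N_F$ as lattices, and by \eqref{eq:weak-splitting} the fan $\Sigma$ is split by $\widehat\Delta$ and $\Sigma_F$ in the sense of \cite[Definition~3.3.18]{cox-little-toric}. I expect the main obstacle to be Step~2: choosing the wall so that the multiplicity jump is genuinely $q'\geq 2$, rather than being absorbed by a change of complement. I would first try reducing to rays. Unique primitive lifts already saturate each one-dimensional face, so the defect is a global phenomenon of the whole cone $\tau$. I would therefore work with a representative $n=h+\sum c_iv_i$ with $0\le c_i<1$ and some $c_1\neq 0$, mirroring the normalization \eqref{eq:A1-bi-range}. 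I would then take the wall opposite $v_1$, just as the curve $V(\mu)$ was chosen in Theorem~\ref{thm:A1-version}.
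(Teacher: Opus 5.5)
Your proposal is correct and is essentially the paper's proof. The paper also assumes $p(L)\subsetneq N_Y$ and finds an index $i$ for which $\bar v_i$ is non-primitive in the rank-one lattice $N/(N\cap\Span_\RR(\widehat\tau+\delta_i))$. It does this via the dual basis of $N_F$; this is equivalent to your fractional coefficient $c_1\in(0,1)$, because the image of your $n$ in that quotient is exactly $c_1\bar v_1$, and it is this observation that justifies $q'\geq 2$. The paper then evaluates $-K_{X/Y}\cdot C_i=(m_+-m_-)(e_i)=(d+1)/q_i$ from the local Cartier data on the two adjacent maximal cones, exactly as you propose. One point to tidy: your ``multiplicity ratio'' should be read as this lattice index rather than as a ratio of simplicial multiplicities, since $\widehat\tau$ need not be simplicial.
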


\begin{proof}
It is enough to prove the assertion for the maximal cone $\tau$; the statement for its
faces then follows by restriction.

Set
\[
  H:=\Span_{\RR}(\widehat\tau),
  \qquad
  L:=N\cap H.
\]
The map $p|_L$ is injective, and $p(L)$ is a finite-index sublattice of $N_Y$.  Suppose,
for a contradiction, that
\[
  p(L)\subsetneq N_Y.
\]
Since $H$ is rational, $L$ is saturated in $N$, and hence
\[
  \overline N:=N/L
\]
is a lattice of rank $d$.  Moreover, $N_F\cap L=0$.  Surjectivity of $p$ gives an exact
sequence
\begin{equation}
  0\longrightarrow N_F
  \longrightarrow \overline N
  \longrightarrow N_Y/p(L)
  \longrightarrow0.
  \label{eq:overlattice}
\end{equation}
Thus $N_F$ is a proper finite-index sublattice of $\overline N$.

For $1\leq i\leq d$, put
\[
  \delta_i:=\Cone(v_1,\ldots,\widehat v_i,\ldots,v_d)
\]
and consider the rank-one lattice
\[
  \overline N_i
  :=
  \overline N\Big/
  \bigl(\overline N\cap\Span_{\RR}(\delta_i)\bigr).
\]
Let $e_i$ be its primitive generator in the direction of $v_i$, and write
\[
  \overline v_i=q_i e_i,
  \qquad q_i\geq1.
\]
By \eqref{eq:Pd-relation}, the image of $v_0$ is $-q_i e_i$.

We claim that $q_i\geq2$ for at least one $i$.  Let
$\varepsilon_1,\ldots,\varepsilon_d$ be the basis of
$\operatorname{Hom}(N_F,\ZZ)$ dual to $v_1,\ldots,v_d$.  The equality $q_i=1$ is
equivalent to $\varepsilon_i$ taking integral values on $\overline N$.  If $q_i=1$ for
every $i$, then every $x\in\overline N$ has an expression
\[
  x=\sum_{i=1}^d\varepsilon_i(x)v_i
\]
with integral coefficients.  Hence $x\in N_F$, which contradicts
$N_F\subsetneq\overline N$.  The claim follows.

Fix $i$ with $q_i\geq2$ and put
\[
  \eta_i:=\widehat\tau+\delta_i.
\]
By Lemma~\ref{lem:weak-splitting}, this is a cone of $\Sigma$ of codimension one.
Thus
\[
  C_i:=V(\eta_i)
\]
is a complete torus-invariant curve in the fiber over the closed orbit of $Y$.  The two
maximal cones adjacent to $\eta_i$ are
\[
  \widehat\tau+\gamma_0
  =\eta_i+\RR_{\geq0}v_i
\]
and
\[
  \widehat\tau+\gamma_i
  =\eta_i+\RR_{\geq0}v_0.
\]

Let $m_+$ and $m_-$ be the local rational linear functions defining $-K_{X/Y}$ on
these two cones. 
We use the convention that the local linear function associated to a
torus-invariant divisor takes the
coefficient of the corresponding invariant prime divisor on each primitive ray generator. 
Since $K_{X/Y}$ is $\mathbb Q$-Cartier, such functions exist and
agree on $\eta_i$.  The divisor $\varphi^{[*]}K_Y$ has coefficient zero along every
vertical divisor.  Hence $-K_{X/Y}$ has coefficient $1$ along every vertical ray, just
as $-K_X$ does.  Therefore
\[
  m_+(v_i)=1.
\]
The second cone contains
$v_0,v_1,\ldots,\widehat v_i,\ldots,v_d$, and $m_-$ takes the value $1$ on all of
these rays.  From \eqref{eq:Pd-relation},
\[
  v_i=-\left(v_0+\sum_{j\neq i}v_j\right),
\]
so
\[
  m_-(v_i)=-d.
\]
Hence
\[
  (m_+-m_-)(v_i)=d+1.
\]

The quotient lattice attached to the invariant curve $C_i$ is naturally
$\overline N_i$, and the image of $v_i$ is $q_i e_i$.  The standard toric slope formula
for a $\mathbb Q$-Cartier divisor on an invariant curve gives
\[
  -K_{X/Y}\cdot C_i
  =(m_+-m_-)(e_i)
  =\frac{d+1}{q_i}
  \leq\frac{d+1}{2}
  \leq d.
\]
This contradicts \eqref{eq:curve-inequality}.  Therefore $p(L)=N_Y$.
Since $p_{\RR}$ maps $\widehat\tau$ isomorphically onto $\tau$, we obtain
\[
  p(\widehat\tau\cap N)=\tau\cap N_Y.
\]
The assertion for every face follows by restriction.
\end{proof}

\subsection{Proof of the affine local theorem}

\begin{proof}[Proof of Theorem~\ref{thm:affine-local}]
After removing a torus factor, we may assume that the cone defining the affine toric
variety $Y$ is full-dimensional.  Lemma~\ref{lem:general-fiber} identifies the fiber fan
with the standard fan $\Sigma_F$ of $\PP^d$.  Lemma~\ref{lem:unique-lifts} gives a
unique primitive lift of every ray of the base.  Lemma~\ref{lem:weak-splitting} gives a
horizontal subfan $\widehat\Delta$ and the weak product decomposition
\eqref{eq:weak-splitting}.  Lemma~\ref{lem:saturation} gives the integral lattice equality
\eqref{eq:saturation}.

By Lemmas~\ref{lem:weak-splitting} and~\ref{lem:saturation},
$\Sigma$ is split by
$\widehat{\Delta}$ and
$\Sigma_F$.
Hence
\cite[Theorem~3.3.19]{cox-little-toric}
yields a locally trivial toric fiber bundle with fiber
$\mathbb{P}^d$. 

Since \(Y=U_\tau\) consists of a single affine toric chart, the local
trivialization furnished by
\cite[Theorem~3.3.19]{cox-little-toric}
is global. Consequently,
\[
X\simeq\mathbb P^d\times Y
\]
equivariantly over \(Y\). 
\end{proof}

\subsection{Application to long relative extremal contractions}

We record the global consequence for which the local theorem was designed.

\begin{cor}[Global relative projective-space-bundle statement]
\label{cor:global}
Let $X$ be a projective toric variety, and let
\[
  \varphi_R:X\longrightarrow W
\]
be a projective toric contraction associated to an extremal ray $R\subset\NE(X)$.
Assume that
\[
  d:=\dim X-\dim W>0,
\]
that $\varphi_R$ is equidimensional, and that the toric relative canonical divisor
\[
  K_{X/W}:=K_X-\varphi_R^{[*]}K_W
\]
is $\mathbb Q$-Cartier.  Define the relative length of $R$ by
\[
  l_{X/W}(R)
  :=
  \min_{[C]\in R}\bigl(-K_{X/W}\cdot C\bigr),
\]
where $C$ runs through complete integral curves whose numerical classes belong to $R$.
If
\[
  l_{X/W}(R)>d,
\]
then $\varphi_R$ is a toric $\PP^d$-bundle.
\end{cor}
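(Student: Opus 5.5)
The plan is to reduce Corollary~\ref{cor:global} to Theorem~\ref{thm:affine-local} applied over each affine toric chart of $W$. Let $\Delta_W$ be the fan of $W$. For each cone $\sigma\in\Delta_W$, consider the affine open toric subvariety $U_\sigma\subset W$ and the restriction
\[
  \varphi_\sigma:=\varphi_R|_{\varphi_R^{-1}(U_\sigma)}\colon \varphi_R^{-1}(U_\sigma)\longrightarrow U_\sigma .
\]
Each hypothesis of Theorem~\ref{thm:affine-local} has to be checked for $\varphi_\sigma$.

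\emph{Properness, surjectivity and connected fibers.} These follow because $\varphi_R$ is a contraction, so $\varphi_{R*}\mathcal O_X=\mathcal O_W$, and all three properties are local on the base.

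\emph{Equidimensionality and relative dimension.} These restrict directly, and the relative dimension is $d>0$.

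\emph{The relative canonical divisor.} The toric relative canonical divisor is compatible with restriction: $\varphi_R^{[*]}K_W$ is defined by pulling back over the smooth locus and taking the closure, and both operations commute with passing to the open set $\varphi_R^{-1}(U_\sigma)$. Hence
\[
  K_{X/W}|_{\varphi_R^{-1}(U_\sigma)}=K_{\varphi_R^{-1}(U_\sigma)/U_\sigma},
\]
and this divisor is $\mathbb Q$-Cartier because $K_{X/W}$ is.

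\emph{The curve inequality.} This is the step I expect to need the extremality hypothesis. Let $C\subset\varphi_R^{-1}(U_\sigma)$ be a complete integral curve. Since $U_\sigma$ is affine, $\varphi_R(C)$ is a point, so $C$ is contracted by $\varphi_R$. Because $\varphi_R$ is the contraction of the extremal ray $R$, every contracted curve has numerical class in $R$. Indeed, $\varphi_R$ is defined as the morphism associated to a supporting face of $\NE(X)$ equal to $R$; a contracted curve has zero degree against the pullback of an ample divisor on $W$, and that pullback is nef and vanishes on $\NE(X)$ exactly along $R$. Thus $[C]\in R$, and
\[
  -K_{X/W}\cdot C\geq l_{X/W}(R)>d .
\]
The point of this step is that $\rho(X/W)=1$ ensures there is no other class of contracted curve for which the inequality could fail.

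Theorem~\ref{thm:affine-local} then yields an equivariant isomorphism
\[
  \varphi_R^{-1}(U_\sigma)\simeq\PP^d\times U_\sigma
\]
over $U_\sigma$, for every $\sigma\in\Delta_W$. Since the $U_\sigma$ cover $W$, the morphism $\varphi_R$ is Zariski-locally trivial with fiber $\PP^d$ and torus-equivariant trivializations, that is, a toric $\PP^d$-bundle. If a fan-level statement is preferred instead, the fan of $X$ on each chart is split by a horizontal lift of $\sigma$ and the standard fan $\Sigma_F$. The lifts of the rays of $\Delta_W$ given by Lemma~\ref{lem:unique-lifts} are unique, so these horizontal lifts glue to a global subfan, and \cite[Theorem~3.3.19]{cox-little-toric} applies globally. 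The only point requiring care is the extremality argument in the curve-inequality check; the rest is formal.
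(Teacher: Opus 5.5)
Your proposal is correct and follows essentially the same route as the paper. Like the paper, you restrict $\varphi_R$ over the affine toric charts of $W$, check the hypotheses of Theorem~\ref{thm:affine-local}, use extremality only to place every complete curve in $\varphi_R^{-1}(U)$ in $R$, and read off Zariski-local triviality. Your optional fan-level gluing remark is also sound, but the paper does not need it.
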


\begin{proof}
Let $U\subset W$ be a maximal affine toric chart and put
$X_U:=\varphi_R^{-1}(U)$.  The restricted morphism
\[
  X_U\longrightarrow U
\]
is proper, surjective, equidimensional, toric, and has connected fibers.  Moreover, its
toric relative canonical divisor is the restriction of $K_{X/W}$ and is therefore
$\mathbb Q$-Cartier.

Let $C\subset X_U$ be a complete curve.  Since $U$ is affine, the image of $C$ is a
point.  Thus $C$ is contracted by $\varphi_R$, and its numerical class belongs to $R$.
Hence
\[
  -K_{X_U/U}\cdot C
  =-K_{X/W}\cdot C
  \geq l_{X/W}(R)>d.
\]
Therefore Theorem~\ref{thm:affine-local} applies and gives
\[
  X_U\simeq\PP^d\times U
\]
over $U$.  The maximal affine toric charts cover $W$, so these local products show that
$\varphi_R$ is a Zariski locally trivial toric $\PP^d$-bundle.
\end{proof}

\begin{rem}
Under the assumptions of Corollary~\ref{cor:global}, replace the hypotheses that \(K_{X/W}\) is \(\mathbb{Q}\)-Cartier and that \(l_{X/W}(R)>d\) by the assumptions that \(K_X\) is \(\mathbb{Q}\)-Cartier and that
\[
l(R):=
\min_{[C]\in R}(-K_X\cdot C)>d,
\]
where \(C\) runs through complete integral curves. Then the same conclusion holds: \(\varphi_R\) is a toric \(\mathbb{P}^d\)-bundle.

Indeed, let \(U\subset W\) be a maximal affine toric chart and put \(X_U:=\varphi_R^{-1}(U)\). Every complete curve \(C\subset X_U\) is contracted by \(\varphi_R\), and hence
\[
-K_X\cdot C\geq l(R)>d.
\]
Therefore the \(K_X\)-version of Theorem~\ref{thm:affine-local} given in Remark~\ref{rem:general-divisor} applies to \(X_U\to U\). The same argument as in the proof of Corollary~\ref{cor:global} shows that \(\varphi_R\) is a toric \(\mathbb{P}^d\)-bundle. 

This may be viewed as an equidimensional non-\(\mathbb{Q}\)-factorial
counterpart of \cite[Corollary~3.3]{fujino-length}. 
\end{rem}

\begin{rem}
Under the assumptions of Corollary~\ref{cor:global}, if $K_W$ is
$\mathbb{Q}$-Cartier, then
\[
K_X=K_{X/W}+\varphi_R^*K_W
\]
is also $\mathbb{Q}$-Cartier. Moreover,
\[
-K_{X/W}\cdot C=-K_X\cdot C
\]
for every curve contracted by $\varphi_R$. Thus
\[
l_{X/W}(R)=l(R),
\]
and the relative and usual formulations agree. 
\end{rem}

\begin{rem}
The affine theorem does not require a relative Picard-number hypothesis.  In the global
corollary, extremality is used only to guarantee that every complete curve in the inverse
image of an affine chart belongs to the single ray $R$ and hence satisfies the required
relative anticanonical inequality.
\end{rem}

The following example shows that the equidimensionality assumption in
Corollary~\ref{cor:global} cannot be dropped.

\begin{ex}
Let
\[
  N=\mathbb Z^3,
  \qquad
  p\colon N\longrightarrow N_W=\mathbb Z^2,
  \qquad
  p(x,y,z)=(x,y).
\]
The toric variety $\mathbb P^1\times\mathbb P^2$ has fan in $N$ with rays generated by
\[
\begin{aligned}
  h_1&=(1,0,0), & h_2&=(0,1,0), & h_3&=(-1,-1,0),\\
  v_+&=(0,0,1), & v_-&=(0,0,-1). &
\end{aligned}
\]
The second projection $\mathbb P^1\times\mathbb P^2\to W=\mathbb P^2$ is associated
to $p$.  The rays of the fan of $W$ are generated by
\[
  \bar h_1=(1,0),\qquad
  \bar h_2=(0,1),\qquad
  \bar h_3=(-1,-1).
\]
Let
\[
  \mu\colon\widetilde X\longrightarrow\mathbb P^1\times\mathbb P^2
\]
be the blow-up at the point $V(\langle h_1,h_2,v_+\rangle)$.  Thus the fan of
$\widetilde X$ has one additional ray generated by
\[
  e:=h_1+h_2+v_+.
\]
Its primitive relations are
\[
  h_1+h_2+v_+=e,\qquad h_3+e=v_+,\qquad v_-+e=h_1+h_2,
\]
and
\[
  h_1+h_2+h_3=0,\qquad v_++v_-=0.
\]
The first three relations are extremal.  Let
\[
  \psi\colon\widetilde X\longrightarrow X=X_\Sigma
\]
be the contraction associated to the third relation. 
The contraction $\psi$ is small, 
and $X$ is a non-$\mathbb{Q}$-factorial 
projective toric threefold. 
A direct check of the canonical support 
function shows that $X$ is Gorenstein and 
that $\psi$ is crepant. 
There is a
toric morphism
\[
  \varphi_R\colon X\longrightarrow W
\]
with $\rho(X/W)=1$, associated to an extremal ray $R\subset\NE(X)$.  Its general fiber
is $\mathbb P^1$, so $d=1$.  On the other hand, the divisor
$V(\langle e\rangle)\subset X$ is contracted to the point
$V(\langle\bar h_1,\bar h_2\rangle)\in W$.  Thus $\varphi_R$ is not equidimensional.

The torus-invariant curves contracted by $\varphi_R$ are
\[
  C_1:=V(\langle h_1,e\rangle),\quad
  C_2:=V(\langle h_2,e\rangle),\quad
  C_3:=V(\langle v_+,e\rangle),
\]
and
\[
  C_4:=V(\langle h_1,h_3\rangle),\qquad
  C_5:=V(\langle h_2,h_3\rangle).
\]
We have
\[
  (-K_X\cdot C_4)=(-K_X\cdot C_5)=2.
\]
On $\widetilde X$, put
\[
  C'_1:=V(\langle h_1,e\rangle),\quad
  C'_2:=V(\langle h_2,e\rangle),\quad
  C'_3:=V(\langle v_+,e\rangle).
\]
The wall relations give
\[
  (-K_{\widetilde X}\cdot C'_1)
  =(-K_{\widetilde X}\cdot C'_2)
  =(-K_{\widetilde X}\cdot C'_3)=2.
\]
Since $\psi$ is small and crepant,
\[
  (-K_X\cdot C_1)=(-K_X\cdot C_2)=(-K_X\cdot C_3)=2.
\]
Because $W=\mathbb P^2$ is smooth, $K_{X/W}=K_X-\varphi_R^*K_W$ and
\[
  -K_{X/W}\cdot C_i=-K_X\cdot C_i=2
  \qquad(1\leq i\leq5).
\] Since every effective curve 
on a complete toric variety is numerically 
equivalent to an effective torus-invariant \(1\)-cycle, 
the above computations show that 
\[
  l_{X/W}(R)=2>1=d,
\]
although $\varphi_R\colon X\to W$ is not a $\mathbb P^1$-bundle.
\end{ex}

\bibliographystyle{amsalpha}
\bibliography{ref}

\end{document}